\newtheorem{theorem}{Theorem}[section]
\newtheorem{defn}[theorem]{Definition}
\newtheorem{lemma}[theorem]{Lemma}
\newtheorem{eple}[theorem]{Example}
\newtheorem{rmk}[theorem]{Remarks}
\newtheorem{dsc}[theorem]{Discussion}
\newtheorem{nota}[theorem]{Notation}
\newsavebox{\indbin}
\savebox{\indbin}{\begin{picture}(0,0)
\newlength{\gnu}
\settowidth{\gnu}{$\smile$} \setlength{\unitlength}{.5\gnu}
\put(-1,-.65){$\smile$} \put(-.25,.1){$|$}
\end{picture}}
\newcommand{\be}{\begin{enumerate}}
\newcommand{\bd}{\begin{defn}}
\newcommand{\bt}{\begin{theorem}}
\newcommand{\bl}{\begin{lemma}}
\newcommand{\ee}{\end{enumerate}}
\newcommand{\ed}{\end{defn}}
\newcommand{\et}{\end{theorem}}
\newcommand{\el}{\end{lemma}}
\begin{document}
\title{An Application of Fourier Analysis to Riemann Sums}
\author{Tristram de Piro}
\address{Mathematics Department, Harrison Building, Streatham Campus, University of Exeter, North Park Road, Exeter, Devon, EX4 4QF, United Kingdom}
\begin{abstract}
We develop a method for calculating Riemann sums using Fourier analysis.
\end{abstract}

\maketitle
\begin{section}{Poisson Summation Formula}

\begin{defn}
\label{transforms}

If $f\in L^{1}(\mathcal{R})$, we define;\\

$(f)^{\wedge}(y)=\int_{-\infty}^{\infty}f(x)e^{-2\pi ixy}dx$\\

$(f)_{-}(y)=f(-y)$\\

$(f)^{\vee}(y)=\int_{-\infty}^{\infty}f(x)e^{2\pi ixy}dx$\\

and, if $g\in L^{1}([0,1])$, $m\in\mathcal{Z}$, we define;\\

$(g)^{\wedge}(m)=\int_{0}^{1}g(x)e^{-2\pi ixm}dx$\\

\end{defn}

\begin{rmk}
\label{inversion}

If $f\in\mathcal{S}(\mathcal{R})$, we have that;\\

$f(x)=\int_{-\infty}^{\infty}(f)^{\wedge}(y)e^{2\pi ixy}dy$, $(x\in\mathcal{R})$\\

and, if $g\in C^{\infty}([0,1])$, (\footnote{\label{smooth} By which we mean that $g|_{(0,1)}\in C^{\infty}(0,1)$, and there exist $\{g_{k}\in C[0,1]: k\in\mathcal{Z}_{\geq 0}\}$, such that $g_{k}|_{(0,1)}=g^{(k)}$, and $g_{k}(0)=g_{k}(1)$.}), the series;\\

$\sum_{m\in\mathcal{Z}}(g)^{\wedge}(m)e^{2\pi ixm}$\\

converges uniformly to $g$ on $[0,1]$. See \cite{SS},\cite{dep1} and \cite{dep2}.\\

Also observe that $(f)^{\vee}=(f_{-})^{\wedge}$ and $(f)^{\wedge}=(f_{-})^{\vee}$.

\end{rmk}

\begin{theorem}
\label{poisson}

Let $f\in\mathcal{S}(\mathcal{R})$, and let;\\

$g(y)=\sum_{m\in\mathcal{Z}}f(y+m)$, $(y\in [0,1])$\\

Then $g\in C^{\infty}([0,1])$ and the series\\

$\sum_{m\in\mathcal{Z}}(f)^{\wedge}(m)e^{2\pi iym}$\\

converges uniformly to $g$ on $[0,1]$.\\

In particular;\\

$\sum_{m\in\mathcal{Z}}f(m)=\sum_{m\in\mathcal{Z}}(f)^{\wedge}(m)$\\

\end{theorem}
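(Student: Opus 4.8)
The plan is to establish the Poisson Summation Formula in three stages: first verify that the periodization $g$ is well-defined and smooth, then identify its Fourier coefficients, and finally deduce the summation identity by evaluating at a point.

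\textbf{Step 1: $g$ is well-defined and lies in $C^{\infty}([0,1])$.} Since $f\in\mathcal{S}(\mathcal{R})$, for every $N$ there is a constant $C_N$ with $|f(x)|\leq C_N(1+|x|)^{-N}$, and the same bound holds for each derivative $f^{(k)}$. Taking $N=2$, the series $\sum_{m}f^{(k)}(y+m)$ converges absolutely and uniformly for $y\in[0,1]$ (indeed on any compact set), by comparison with $\sum_m (1+|m|-1)^{-2}$. Uniform convergence of the differentiated series justifies term-by-term differentiation, so $g$ is infinitely differentiable on $(0,1)$ with $g^{(k)}(y)=\sum_m f^{(k)}(y+m)$; each such sum extends continuously to $[0,1]$, and the periodicity $g_k(0)=g_k(1)$ follows from reindexing $m\mapsto m+1$. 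Hence $g\in C^{\infty}([0,1])$ in the sense of the footnote, and Remark~\ref{inversion} applies.

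\textbf{Step 2: compute $(g)^{\wedge}(m)$.} By Remark~\ref{inversion}, $\sum_{m\in\mathcal{Z}}(g)^{\wedge}(m)e^{2\pi iym}$ converges uniformly to $g$ on $[0,1]$, so it suffices to show $(g)^{\wedge}(m)=(f)^{\wedge}(m)$ for all $m\in\mathcal{Z}$. Compute
\[
(g)^{\wedge}(m)=\int_{0}^{1}\Bigl(\sum_{n\in\mathcal{Z}}f(y+n)\Bigr)e^{-2\pi iym}\,dy.
\]
The uniform convergence from Step 1 permits interchanging sum and integral, giving $\sum_{n}\int_{0}^{1}f(y+n)e^{-2\pi iym}\,dy$. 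In the $n$-th term substitute $x=y+n$; since $e^{-2\pi i(x-n)m}=e^{-2\pi ixm}$ (as $nm\in\mathcal{Z}$), this becomes $\sum_{n}\int_{n}^{n+1}f(x)e^{-2\pi ixm}\,dx=\int_{-\infty}^{\infty}f(x)e^{-2\pi ixm}\,dx=(f)^{\wedge}(m)$, where the last reassembly of the integral is valid because $f\in L^{1}(\mathcal{R})$. This proves the uniform convergence claim.

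\textbf{Step 3: the summation identity.} Evaluate the uniformly convergent expansion $g(y)=\sum_{m}(f)^{\wedge}(m)e^{2\pi iym}$ at $y=0$ (equivalently $y=1$): the left side is $g(0)=\sum_{m}f(m)$ and the right side is $\sum_{m}(f)^{\wedge}(m)$, since $e^{0}=1$. This yields $\sum_{m\in\mathcal{Z}}f(m)=\sum_{m\in\mathcal{Z}}(f)^{\wedge}(m)$.

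I expect the only real subtlety to be the two interchanges of summation and integration and the decay estimates underpinning Step 1; these are routine given the Schwartz hypothesis, but they are where all the analytic content sits. Everything else is bookkeeping with the definitions in Definition~\ref{transforms} and the inversion statement in Remark~\ref{inversion}.
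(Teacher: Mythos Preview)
Your proposal is correct and follows essentially the same route as the paper: Schwartz decay gives uniform convergence of $\sum_m f^{(k)}(y+m)$ so $g\in C^\infty([0,1])$ with the periodicity checked by reindexing, then $(g)^{\wedge}(m)=(f)^{\wedge}(m)$ via the sum--integral interchange, and finally Remark~\ref{inversion} plus evaluation at $y=0$ yields the identity. The paper merely spells out the decay bound more explicitly (with a concrete estimate like $\leq 5C_{r,n}$), but the structure and all the key ideas coincide.
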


\begin{proof}
Observe that, as $f\in\mathcal{S}(\mathcal{R})$, for $y_{0}\in [0,1]$, $r\in\mathcal{Z}_{\geq 0}$, $n\geq 2$;\\

$\sum_{m\in\mathcal{Z}}|{d^{r}f\over dy^{r}}|_{y_{0}+m}|$\\

$\leq \sum_{m\in\mathcal{Z}}{C_{r,n}\over (1+|y_{0}+m|^{n})}$\\

$\leq \sum_{m\in\mathcal{Z}}{C_{r,n}\over (1+|m|^{n})}$\\

$\leq C_{r,n}+2C_{r,n}\sum_{m\geq 1}{1\over m^{n}}$\\

$\leq C_{r,n}+2C_{r,n}(1+[{y^{-n+1}\over -n+1}]^{\infty}_{1}$\\

$=C_{r,n}(1+2(1+{1\over n-1})\leq 5C_{r,n}$ $(*)$\\

where $C_{r,n}=sup_{w\in\mathcal{R}}(|w|^{n}{d^{r}f\over dx^{r}}|_{w})$\\

Suppose, inductively, that ${d^{r}g\over dy^{r}}|_{y_{0}}=\sum_{m\in\mathcal{Z}}{d^{r}f\over dy^{r}}|_{y_{0}+m}$, for $y_{0}\in [0,1]$, (\footnote{Given ${d^{r}g\over dy^{r}}$, we interpret ${d^{r+1}g\over dy^{r+1}}|_{0}=lim_{h\rightarrow 0,+}{1\over h}({d^{r}g\over dy^{r}}|_{h}-{d^{r}g\over dy^{r}}|_{0})$}), then, using $(*)$, we have, for $r\geq 1$, that;\\

${d^{r+1}g\over dy^{r+1}}|={d\over dx}(\sum_{m\in\mathcal{Z}}{d^{r}f_{m}\over dy^{r}})=\sum_{m\in\mathcal{Z}}{d^{r+1}f_{m}\over dy^{r+1}}$\\

where $f_{m}(x)=f(x+m)$, for $m\in\mathcal{Z}$. Moreover, for $r\geq 0$;\\

${d^{r}g\over dy^{r}}|_{0}=\sum_{m\in\mathcal{Z}}{d^{r}f_{m}\over dy^{r}}|_{0}=\sum_{m\in\mathcal{Z}}{d^{r}f_{m}\over dy^{r}}|_{1}={d^{r}g\over dy^{r}}|_{1}$\\

It follows that $g\in C^{\infty}[0,1]$. Moreover, we have that, for $n\in\mathcal{Z}$;\\

$(g)^{\wedge}(n)=\int_{0}^{1}g(y)e^{-2\pi iyn}dx$\\

$=\int_{0}^{1}(\sum_{m\in\mathcal{Z}}f(y+m))e^{-2\pi iyn}dx$\\

$=\int_{0}^{1}(\sum_{m\in\mathcal{Z}}f(y+m))e^{-2\pi i(y+m)n}dx$\\

$=\int_{-\infty}^{\infty}f(y)e^{-2\pi iyn}dx=(f)^{\wedge}(n)$\\

Using Remark \ref{inversion}, the series;\\

$\sum_{m\in\mathcal{Z}}\hat{f}(m)e^{2\pi iym}$\\

converges uniformly to $g$ on $[0,1]$ as required.

\end{proof}

\begin{lemma}
\label{inversionconditions}
If $h\in C^{2}(\mathcal{R})$, and there exists $C\in\mathcal{R}$, with;\\

 $sup_{x\in\mathcal{R}}(|x|^{2}|h(x)|,|x|^{2}|h'(x)|,|x|^{2}|h''(x)|)\leq C$\\

then the Inversion theorem holds for $h$. That is $(h)^{\wedge}\in L^{1}(\mathcal{R})$ and;\\

$h(x)=\int_{\mathcal{R}}(h)^{\wedge}(y)e^{2\pi i xy}dy$ $(x\in\mathcal{R})$\\

\end{lemma}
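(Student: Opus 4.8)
The plan is to show that the hypotheses on $h$ give enough decay on both $h$ and its Fourier transform $(h)^{\wedge}$ to justify the inversion formula by a direct computation, rather than by appealing to a Schwartz-space argument. First I would record the elementary observation that the pointwise bound $\sup_{x}(|x|^{2}|h(x)|)\leq C$, together with $h\in C^{2}\subseteq C^{0}$ which gives boundedness of $h$ on any compact set, yields $h\in L^{1}(\mathcal{R})$: indeed $|h(x)|\leq \min(M, C|x|^{-2})$ for a suitable $M$, and $\min(M,C|x|^{-2})$ is integrable on $\mathcal{R}$. The same estimate applies verbatim to $h'$ and $h''$, so $h,h',h''\in L^{1}(\mathcal{R})\cap C^{0}(\mathcal{R})$, and in particular $(h)^{\wedge}$ is well-defined, bounded, and continuous.

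Next I would get the decay of $(h)^{\wedge}$. Since $h,h'\in L^{1}$ and $h\in C^{1}$, integration by parts (the boundary terms vanish because $h(x)\to 0$ as $|x|\to\infty$, which follows from $|x|^{2}|h(x)|\leq C$) gives $(h')^{\wedge}(y)=2\pi i y\,(h)^{\wedge}(y)$; iterating with $h'\in C^{1}$, $h''\in L^{1}$ gives $(h'')^{\wedge}(y)=(2\pi i y)^{2}(h)^{\wedge}(y)$. Hence for $y\neq 0$,
\[
|(h)^{\wedge}(y)|=\frac{|(h'')^{\wedge}(y)|}{4\pi^{2}y^{2}}\leq \frac{\|h''\|_{L^{1}}}{4\pi^{2}y^{2}},
\]
while $|(h)^{\wedge}(y)|\leq \|h\|_{L^{1}}$ for all $y$. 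Combining these two bounds exactly as in the first step shows $(h)^{\wedge}\in L^{1}(\mathcal{R})$, which is the first assertion of the lemma.

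With $(h)^{\wedge}\in L^{1}$ in hand, I would prove the inversion identity. The cleanest route is the standard Gaussian (or Fej\'er) approximate-identity argument: for $\varepsilon>0$ consider $\int_{\mathcal{R}}(h)^{\wedge}(y)e^{2\pi i xy}e^{-\pi\varepsilon^{2}y^{2}}\,dy$, use Fubini (justified since $h\in L^{1}$, $(h)^{\wedge}\in L^{1}$, and the Gaussian is integrable) to rewrite it as $(h * G_{\varepsilon})(x)$ where $G_{\varepsilon}$ is the $L^{1}$-normalized Gaussian of width $\sim\varepsilon$, whose Fourier transform is the Gaussian $e^{-\pi\varepsilon^{2}y^{2}}$. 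As $\varepsilon\to 0^{+}$, the left-hand side tends to $\int_{\mathcal{R}}(h)^{\wedge}(y)e^{2\pi i xy}\,dy$ by dominated convergence (dominating function $|(h)^{\wedge}|\in L^{1}$), while $(h*G_{\varepsilon})(x)\to h(x)$ at every point of continuity of $h$, hence everywhere since $h\in C^{0}$. Equating the two limits gives the inversion formula for every $x\in\mathcal{R}$. If one prefers to avoid convolution machinery, one can instead invoke Remark \ref{inversion} after noting that the decay estimates above let $h$ be approximated in the relevant norms by Schwartz functions, but the Gaussian argument is self-contained and cleaner.

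The main obstacle is the justification of interchanging limit and integral in the approximate-identity step and, relatedly, verifying that $h*G_{\varepsilon}\to h$ pointwise without extra uniform-continuity hypotheses; here the quantitative decay $|h(x)|\leq C|x|^{-2}$ is what makes the tails of the convolution uniformly small, and local continuity of $h\in C^{2}$ handles the central part, so the estimate closes. Everything else is bookkeeping with the two-regime bound $\min(\text{const},\text{const}\cdot|y|^{-2})$ and standard integration by parts.
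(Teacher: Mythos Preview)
Your argument is correct. The decay hypotheses give $h,h',h''\in L^{1}(\mathcal{R})\cap C_{0}(\mathcal{R})$, two integrations by parts yield $|(h)^{\wedge}(y)|\leq \|h''\|_{L^{1}}/(4\pi^{2}y^{2})$ and hence $(h)^{\wedge}\in L^{1}$, and the Gaussian approximate-identity computation then establishes the pointwise inversion formula; each limit passage is properly dominated. One small remark: in the Fubini step the relevant integrability is simply $\iint |h(t)|\,e^{-\pi\varepsilon^{2}y^{2}}\,dt\,dy<\infty$, so the fact that $(h)^{\wedge}\in L^{1}$ is not actually needed there (it is needed only for the final dominated-convergence step). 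Likewise, for $(h*G_{\varepsilon})(x)\to h(x)$ you do not need the $|x|^{-2}$ tail decay of $h$; continuity and boundedness of $h$ already suffice, since the Gaussian tail $\int_{|u|\geq\delta}G_{\varepsilon}(u)\,du\to 0$.

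The paper, by contrast, gives no self-contained argument: its proof of this lemma consists of a reference to \cite{dep1}, a companion note proving Fourier inversion via nonstandard analysis, with the observation that the hypotheses here are enough for that proof to go through. Your route is the classical real-variable one (integration by parts plus an approximate identity), which is more elementary and keeps the paper independent of the nonstandard machinery; the paper's route is shorter on the page but outsources the content. Either way the conclusion is the same, and your version would make the present paper more self-contained.
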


\begin{proof}
The result follows from inspection of the proof in \cite{dep1}, see Remark 0.4.

\end{proof}

\begin{lemma}
\label{converse}
If $h$ satisfies the conditions of Lemma \ref{inversionconditions}, and $f=(h)^{\vee}$, then $(f)^{\wedge}=h$.
\end{lemma}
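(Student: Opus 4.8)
The plan is to reduce the statement to the inversion theorem of Lemma \ref{inversionconditions}, applied both to $h$ and to its reflection $h_{-}$, together with the elementary symmetry relations $(\psi)^{\vee}=(\psi_{-})^{\wedge}$ and $(\psi)^{\wedge}=(\psi_{-})^{\vee}$ recorded in Remark \ref{inversion}.

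First I would observe that $h_{-}$ again satisfies the hypotheses of Lemma \ref{inversionconditions}: since $(h_{-})'(x)=-h'(-x)$ and $(h_{-})''(x)=h''(-x)$, we have $h_{-}\in C^{2}(\mathcal{R})$, and the supremum appearing in the hypothesis is taken over all of $\mathcal{R}$, hence is unchanged under $x\mapsto -x$, so the same constant $C$ works. Now by Remark \ref{inversion}, $f=(h)^{\vee}=(h_{-})^{\wedge}$, and Lemma \ref{inversionconditions} applied to $h_{-}$ gives $(h_{-})^{\wedge}\in L^{1}(\mathcal{R})$; thus $f\in L^{1}(\mathcal{R})$, so that $(f)^{\wedge}$ is defined and the reflection identities of Remark \ref{inversion} are available for $f$.

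Next I would compute $f_{-}$. Directly from the definition, $f_{-}(y)=f(-y)=(h)^{\vee}(-y)=\int_{-\infty}^{\infty}h(x)e^{-2\pi ixy}\,dx=(h)^{\wedge}(y)$, so $f_{-}=(h)^{\wedge}$. Finally, combining $(f)^{\wedge}=(f_{-})^{\vee}$ (Remark \ref{inversion}) with $f_{-}=(h)^{\wedge}$ and then the inversion theorem for $h$ itself (Lemma \ref{inversionconditions}), we obtain $(f)^{\wedge}=(f_{-})^{\vee}=((h)^{\wedge})^{\vee}=h$, which is the claim. There is no essential difficulty; the one point that needs care is to verify $f\in L^{1}(\mathcal{R})$ before invoking $(f)^{\wedge}$ and the reflection formulas, and this is exactly why one applies Lemma \ref{inversionconditions} to $h_{-}$ and not merely to $h$.
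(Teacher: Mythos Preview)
Your argument is correct and is essentially the paper's own proof: both reduce the claim to the reflection identities of Remark~\ref{inversion} together with Lemma~\ref{inversionconditions}. The only cosmetic difference is that the paper applies the inversion theorem to $h_{-}$ (obtaining $((h_{-})^{\wedge})^{\wedge}=h$ and then identifying $f=(h_{-})^{\wedge}$), whereas you compute $f_{-}=(h)^{\wedge}$ and apply the inversion theorem to $h$; these are mirror images of one another, and your explicit check that $f\in L^{1}(\mathcal{R})$ is a nice point of care.
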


\begin{proof}

As $h$ satisfies the conditions of Lemma \ref{inversionconditions}, so does $h_{-}$, and, therefore, the inversion theorem holds for $h_{-}$. Then;\\

$((h_{-})^{\wedge})^{\vee}=(((h_{-})^{\wedge})^{\wedge})_{-}=(h_{-})$\\

therefore;\\

$(((h_{-})^{\wedge})^{\wedge})=h$. As $f=h^{\vee}={(h_{-})}^{\wedge}$, we have that;\\

$(f)^{\wedge}=(h_{-})^{\wedge})^{\wedge}=h$\\

\end{proof}

\begin{lemma}
\label{C2}
Let $f$ be given by Lemma \ref{converse}. Then, if there exists $D\in\mathcal{R}$, with;\\

 $sup_{x\in\mathcal{R}}(|x|^{4}|h(x)|,|x|^{4}|h'(x)|,|x|^{4}|h''(x)|)\leq D$\\

 we have that $f\in C^{2}(\mathcal{R})$, and, moreover, there exists a constant $F\in\mathcal{R}$, such that;\\

$sup_{y\in\mathcal{R}}(|y|^{2}|f(y)|,|y|^{2}|f'(y)|,|y|^{2}|f''(y)|)\leq F$.\\

\end{lemma}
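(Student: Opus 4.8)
The idea is to obtain the decay of $f=(h)^{\vee}$ and its derivatives directly from the smoothness and decay of $h$, by differentiating under the integral sign and then integrating by parts. Write $f(y)=\int_{\mathcal R}h(x)e^{2\pi ixy}\,dx$. The hypothesis $|x|^{4}|h(x)|\le D$ (with $h$ continuous) gives $h\in L^{1}(\mathcal R)$, and the same bound on $h'$ and $h''$ gives $h',h''\in L^{1}(\mathcal R)$; moreover $x^{j}h(x)\in L^{1}$ for $j=0,1,2$ since $|x|^{4-j}\cdot|x^{j}h(x)|$ is bounded. First I would justify that $f\in C^{2}$: the integrand $h(x)e^{2\pi ixy}$ may be differentiated in $y$ up to twice, producing $(2\pi ix)^{j}h(x)e^{2\pi ixy}$, and since each $x^{j}h(x)$ ($j\le 2$) is in $L^{1}(\mathcal R)$ the differentiation under the integral sign is legitimate by dominated convergence. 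Hence $f^{(j)}(y)=\int_{\mathcal R}(2\pi ix)^{j}h(x)e^{2\pi ixy}\,dx$ for $j=0,1,2$, and each is continuous, so $f\in C^{2}(\mathcal R)$.

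Next I would get the $|y|^{2}$-decay. For $y\ne 0$, integrate by parts twice in the formula for $f^{(j)}(y)$. Setting $\phi_{j}(x)=(2\pi ix)^{j}h(x)$, two integrations by parts give
$$
f^{(j)}(y)=\frac{1}{(2\pi i y)^{2}}\int_{\mathcal R}\phi_{j}''(x)\,e^{2\pi ixy}\,dx ,
$$
provided the boundary terms vanish and $\phi_{j}''\in L^{1}(\mathcal R)$. The boundary terms involve $\phi_{j}(x)$ and $\phi_{j}'(x)$ as $x\to\pm\infty$; expanding by the product rule, $\phi_{j}$ and $\phi_{j}'$ are finite sums of terms of the form $x^{a}h^{(b)}(x)$ with $a\le j\le 2$ and $b\le 1$, and $|x^{a}h^{(b)}(x)|\le D|x|^{a-4}\to 0$, so the boundary terms indeed vanish. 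Similarly $\phi_{j}''$ is a finite sum of terms $x^{a}h^{(b)}(x)$ with $a\le 2$, $b\le 2$, each bounded by $D|x|^{a-4}\le D|x|^{-2}$ for $|x|\ge 1$ and bounded on $|x|\le 1$ by continuity, hence $\phi_{j}''\in L^{1}(\mathcal R)$. Therefore
$$
|y|^{2}\,|f^{(j)}(y)|\le\frac{1}{(2\pi)^{2}}\,\|\phi_{j}''\|_{L^{1}} =: F_{j}
$$
for $|y|\ge 1$, while for $|y|\le 1$ we have $|y|^{2}|f^{(j)}(y)|\le|f^{(j)}(y)|\le\|\phi_{j}\|_{L^{1}}$, which is finite. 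Taking $F$ to be the maximum of these six quantities (three values of $j$, and the "near" and "far" bounds) gives the claim.

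The only mildly delicate point is bookkeeping: one must check that every term arising from the product rule in $\phi_{j}'$ and $\phi_{j}''$ still has enough powers of $h,h',h''$ against enough negative powers of $x$ to be controlled by the hypothesis $|x|^{4}|h^{(b)}(x)|\le D$, and this works precisely because the worst case is $j=2$, $b=2$, $a=2$, giving exponent $2-4=-2$, which is still integrable at infinity. I expect this index-chasing to be the main (though routine) obstacle; everything else is a standard differentiation-under-the-integral plus integration-by-parts argument, already in the spirit of Lemma \ref{inversionconditions}.
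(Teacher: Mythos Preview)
Your argument is correct and matches the paper's proof essentially line for line: both differentiate under the integral sign (using $x^{j}h\in L^{1}$ for $j\le 2$) to establish $f\in C^{2}$, then integrate by parts twice in the identity $f^{(j)}(y)=\int\phi_{j}(x)e^{2\pi ixy}\,dx$ with $\phi_{j}(x)=(2\pi ix)^{j}h(x)$ to obtain the $|y|^{-2}$ decay, splitting into $|y|\le 1$ and $|y|\ge 1$. The only difference is cosmetic: the paper writes out $\phi_{j}''$ explicitly and tracks numerical constants in terms of $D$ and the sup-norms $E,E',E''$ of $h,h',h''$ on $[-1,1]$, whereas you simply observe that the relevant $L^{1}$ norms are finite.
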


\begin{proof}

Letting $E=||h|_{[-1,1]}||_{C[-1,1]}$, we have that, for $y\in\mathcal{R}$, $|x|\geq 1$;\\

$|h(x)e^{2\pi ixy}|=|h(x)|\leq {D\over |x|^{4}}\leq {D\over |x|^{2}}$\\

$|2\pi i x h(x)e^{2\pi ixy}|=2\pi |x||h(x)|\leq {2\pi D\over |x|^{3}}\leq {2 \pi D\over |x|^{2}}$\\

$|-4\pi^{2}x^{2}h(x)e^{2\pi ixy}|=4\pi^{2}|x|^{2}|h(x)|\leq {4\pi^{2} D\over|x|^{2}}\leq {4\pi^{2}D\over |x|^{2}}$\\

and, for $y\in\mathcal{R}$, $|x|\leq 1$;\\

$|x|^{2}|h(x)e^{2\pi ixy}|\leq |h(x)|\leq E$\\

$|x|^{2}|2\pi i x h(x)e^{2\pi ixy}|\leq 2\pi|h(x)|\leq 2\pi E$\\

$|x|^{2}|-4\pi^{2}x^{2} h(x)e^{2\pi ixy}|\leq 4\pi^{2}|h(x)|\leq 4\pi^{2}E$\\

Hence;\\

$sup_{x\in\mathcal{R}}\{|x|^{2}|h(x)e^{2\pi ixy}|,|x|^{2}|2\pi i x h(x)e^{2\pi ixy}|,|x|^{2}|-4\pi^{2}x^{2}h(x)e^{2\pi ixy}|\}$\\

$\leq 4\pi^{2}max(D,E)$\\

and $\{h(x)e^{2\pi ixy},2\pi i x h(x)e^{2\pi ixy},-4\pi^{2}x^{2}h(x)e^{2\pi ixy}\}\subset C(\mathcal{R})$. It follows that, for $y_{0}\in\mathcal{R}$, we can differentiate under the integral sign, to obtain that $\{f(y_{0}),f'(y_{0}),f''(y_{0})\}$ are all defined. By the DCT, using the fact that $-4\pi^{2}x^{2}h(x)\in L^{1}(\mathcal{R})$, we obtain that $f''\in C(\mathcal{R})$, hence, $f\in C^{2}(\mathcal{R})$. Differentiating by parts, using the fact that;\\

$\{h,h',h'',xh,xh',xh'',x^{2}h,x^{2}h',x^{2}h''\}\subset (L^{1}(\mathcal{R})\cap C_{0}(\mathcal{R}))$\\

by the hypotheses of Lemma \ref{inversionconditions} and this Lemma, we have that;\\

$(h'')^{\vee}=-4\pi y^{2}(h)^{\vee}=-4\pi y^{2}f$\\

$(4\pi ih'+2\pi i xh'')^{\vee}=((2\pi ix h)'')^{\vee}=-4\pi y^{2}(2\pi ix h)^{\vee}=-4\pi y^{2}f'$\\

$(8\pi^{2}h+16\pi^{2}xh'+4\pi^{2}x^{2}h'')^{\vee}=((4\pi^{2}x^{2}h)'')^{\vee}$\\

$=-4\pi y^{2}(4\pi^{2}x^{2}h)^{\vee}=-4\pi y^{2}f''$, $(*)$\\

We have, by $(*)$, for $|y|\geq 1$, that;\\

$|f(y)|\leq {|(h'')^{\vee}(y)|\over 4\pi y^{2}}\leq {||h''||_{L^{1}(\mathcal{R})}\over 4\pi y^{2}}\leq {{2D\over 3}+2E''\over 4\pi y^{2}}$\\

$|f'(y)|\leq {|(4\pi ih'+2\pi i xh'')^{\vee}(y)|\over 4\pi y^{2}}$\\

$\leq {||(4\pi ih'+2\pi i xh'')||_{L^{1}(\mathcal{R})}\over 4\pi y^{2}}$\\

$\leq {2||h'||_{L^{1}(\mathcal{R})}+||xh''||_{L^{1}(\mathcal{R})}\over 2 y^{2}}$\\

$\leq {2({2D\over 3}+2E')+D+2E''\over 2 y^{2}}$\\

$|f''(y)|\leq {|(8\pi^{2}h+16\pi^{2}xh'+4\pi^{2}x^{2}h'')^{\vee}(y)|\over 4\pi y^{2}}$\\

$\leq {||(8\pi^{2}h+16\pi^{2}xh'+4\pi^{2}x^{2}h'')||_{L^{1}(\mathcal{R})}\over 4\pi y^{2}}$\\

$\leq {2\pi||h||_{L^{1}(\mathcal{R})}+4\pi||xh'||_{L^{1}(\mathcal{R})}+\pi||x^{2}h''||_{L^{1}(\mathcal{R})}\over y^{2}}$\\

$\leq {2\pi(2{D\over 3}+2E)+4\pi(D+2E')+\pi(2D+2E'')\over y^{2}}$\\

where $E'=||h'|_{[-1,1]}||_{C[-1,1]}$ and $E''=||h''|_{[-1,1]}||_{C[-1,1]}$\\

For $|y|\leq 1$, we have that;\\

$|f(y)|\leq ||h||_{L^{1}(\mathcal{R})}\leq {2D\over 3}+E$\\

$|f'(y)|\leq ||(2\pi i xh)||_{L^{1}(\mathcal{R})}\leq 2\pi D+2E$\\

$|f''(y)|\leq ||-4\pi^{2}x^{2}h||_{L^{1}(\mathcal{R})}\leq 8\pi^{2}D+2E$\\

Hence, we can take $F=max(8\pi^{2}D+2E,{22\pi D\over 3}+4\pi E+8\pi E'+2\pi E'')$

\end{proof}

\begin{defn}
\label{sum3}

Let $f$ be given by satisfying the conditions of Lemmas \ref{converse} and \ref{C2}, we let;\\

$g(y)=\sum_{m\in\mathcal{Z}}f(y+m)$, $(y\in [0,1])$\\

\end{defn}

\begin{lemma}
\label{smooth2}

Let $g$ be given by Definition \ref{sum3}, then $g\in C^{2}[0,1]$.

\end{lemma}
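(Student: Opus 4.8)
The plan is to run the argument of Theorem \ref{poisson} again, but with only the two derivatives that Lemma \ref{C2} controls. By Lemmas \ref{converse} and \ref{C2} we have $f\in C^{2}(\mathcal{R})$ and a constant $F\in\mathcal{R}$ with $|y|^{2}|f^{(r)}(y)|\leq F$ for $r\in\{0,1,2\}$. The first thing I would do is upgrade this to a bound valid on all of $\mathcal{R}$: for $|y|\geq 1$ it gives $|f^{(r)}(y)|\leq F/|y|^{2}\leq 2F/(1+|y|^{2})$, while on the compact set $[-1,1]$ each continuous function $f^{(r)}$ is bounded, say by $G$, so $|f^{(r)}(y)|\leq G\leq 2G/(1+|y|^{2})$ there. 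Hence with $F'=2\max(F,G)$ one has $|f^{(r)}(y)|\leq F'/(1+|y|^{2})$ for every $y\in\mathcal{R}$ and every $r\in\{0,1,2\}$.

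Next I would establish uniform convergence on $[0,1]$ of the three series $\sum_{m\in\mathcal{Z}}f^{(r)}_{m}$, $r\in\{0,1,2\}$, where $f_{m}(x)=f(x+m)$. For $y\in[0,1]$ we have $|y+m|\geq|m|-1$, so, mimicking estimate $(*)$ of Theorem \ref{poisson} with exponent $2$ in place of $n$,
\[
\sum_{m\in\mathcal{Z}}|f^{(r)}(y+m)|\leq\sum_{m\in\mathcal{Z}}\frac{F'}{1+|y+m|^{2}}\leq F'\Bigl(3+2\sum_{k\geq 1}\frac{1}{1+k^{2}}\Bigr)<\infty,
\]
the bound being independent of $y\in[0,1]$. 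The Weierstrass $M$-test then gives the claimed uniform convergence. Applying the standard theorem on termwise differentiation of a uniformly convergent series of $C^{1}$ functions whose derived series also converges uniformly, first to $\sum_{m}f_{m}$ and $\sum_{m}f'_{m}$, and then to $g'=\sum_{m}f'_{m}$ and $\sum_{m}f''_{m}$, yields $g\in C^{2}(0,1)$ with $g'=\sum_{m}f'_{m}$ and $g''=\sum_{m}f''_{m}\in C[0,1]$; at the endpoints one reads the derivatives as the one-sided limits, as in the footnote to Theorem \ref{poisson}. Finally, to meet the convention for $C^{2}[0,1]$ recorded in footnote \ref{smooth}, I would verify the matching conditions at $0$ and $1$: reindexing the (absolutely convergent) sums gives
\[
g^{(r)}(0)=\sum_{m\in\mathcal{Z}}f^{(r)}(m)=\sum_{m\in\mathcal{Z}}f^{(r)}(1+(m-1))=\sum_{m\in\mathcal{Z}}f^{(r)}(1+m)=g^{(r)}(1)
\]
for $r\in\{0,1,2\}$, so $g\in C^{2}[0,1]$.

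There is no real obstacle in this argument; it is a routine truncation of the proof of Theorem \ref{poisson}. The only two points that require a little care are the passage from the $|y|^{2}$-decay at infinity supplied by Lemma \ref{C2} to a genuine global bound of the form $F'/(1+|y|^{2})$ (handled above via continuity of $f^{(r)}$ on $[-1,1]$), and the bookkeeping at the endpoints $0$ and $1$ needed to conform to the definition of $C^{2}[0,1]$ being used.
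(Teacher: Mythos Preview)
Your proof is correct and follows essentially the same route as the paper's: invoke the decay bounds from Lemma \ref{C2}, apply the Weierstrass $M$-test to the three series $\sum_{m}f^{(r)}(y+m)$ to get uniform convergence on $[0,1]$, differentiate termwise, and then reindex to match the endpoint values. You have simply been more careful than the paper about upgrading the $|y|^{2}$-decay to a global $F'/(1+|y|^{2})$ bound and about checking all three endpoint identities $g^{(r)}(0)=g^{(r)}(1)$ rather than just $r=1$.
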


\begin{proof}
Using Lemma \ref{C2} and Weierstrass' M-test, we have that the series;\\

$\sum_{m\in\mathcal{Z}}f(y+m)$, $\sum_{m\in\mathcal{Z}}f'(y+m)$, $\sum_{m\in\mathcal{Z}}f''(y+m)$\\

are uniformly convergent on $[0,1]$. It follows, that $g\in C^{2}(0,1)$, and clearly;\\

$g'_{+}(0)=\sum_{m\in\mathcal{Z}}f'(m)=\sum_{m\in\mathcal{Z}}f'(m+1)=g'_{-}(1)$\\

hence, $g\in C^{2}[0,1]$.

\end{proof}

\begin{lemma}
\label{uniform2}

Let $f\in L^{1}(\mathcal{R})$, such that;\\

$g(y)=\sum_{m\in\mathcal{Z}}f(y+m)$\\

is defined, for $y\in [0,1]$. Then, if $g\in C^{2}[0,1]$, we have that the series  $\sum_{m\in\mathcal{Z}}(f)^{\wedge}(m)e^{2\pi i ym}$ converges uniformly to $g$ on $[0,1]$. In particular;\\

$\sum_{m\in\mathcal{Z}}f(m)=\sum_{m\in\mathcal{Z}}(f)^{\wedge}(m)$\\

\end{lemma}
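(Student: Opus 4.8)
The plan is to reduce this to the Poisson summation machinery already established, by identifying the Fourier coefficients $(g)^{\wedge}(n)$ with $(f)^{\wedge}(n)$ and then invoking the uniform convergence of Fourier series for $C^2$ functions on $[0,1]$. First I would note that, since $f \in L^1(\mathcal{R})$ and the periodization $g(y)=\sum_{m\in\mathcal{Z}}f(y+m)$ is defined on $[0,1]$, the partial sums $\sum_{|m|\leq N} f(y+m)$ converge to $g$ in $L^1([0,1])$: this is the standard fact that $\sum_m \int_0^1 |f(y+m)|\,dy = \int_{\mathcal{R}} |f| < \infty$, so the tail $\sum_{|m|>N}\int_0^1|f(y+m)|\,dy \to 0$. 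Hence I may interchange summation and integration to compute, for $n\in\mathcal{Z}$,
\[
(g)^{\wedge}(n) = \int_0^1 g(y) e^{-2\pi i y n}\,dy = \sum_{m\in\mathcal{Z}} \int_0^1 f(y+m) e^{-2\pi i y n}\,dy.
\]
In each term I substitute $u = y+m$ and use $e^{-2\pi i y n} = e^{-2\pi i (y+m) n}$ (valid since $mn\in\mathcal{Z}$), so the $m$-th term becomes $\int_m^{m+1} f(u) e^{-2\pi i u n}\,du$; summing over $m$ telescopes the intervals to all of $\mathcal{R}$, giving $(g)^{\wedge}(n) = (f)^{\wedge}(n)$. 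This is exactly the computation carried out in the proof of Theorem \ref{poisson}, only now justified under the weaker hypothesis $f\in L^1$ rather than $f\in\mathcal{S}(\mathcal{R})$.

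Next, since we are given $g\in C^2[0,1]$, Remark \ref{inversion} applies: $g\in C^\infty[0,1]$ is not needed for that remark's conclusion once we observe the Fourier series of a $C^2$ periodic function converges uniformly. Concretely, integrating by parts twice (using $g(0)=g(1)$, $g'(0)=g'(1)$, which hold since $g'_+(0)=\sum_m f'(m)$ type identities — or more elementarily, since $g$ is the restriction of a $1$-periodic $C^2$ function) yields $(g)^{\wedge}(n) = (g'')^{\wedge}(n)/(-4\pi^2 n^2)$ for $n\neq 0$, and $g''\in C[0,1]\subset L^2[0,1]$ gives $\sum_n |(g'')^{\wedge}(n)|^2 <\infty$ by Bessel, hence $\sum_n |(g)^{\wedge}(n)| \leq |(g)^{\wedge}(0)| + \big(\sum_{n\neq 0} n^{-4}\big)^{1/2}\big(\sum_n |(g'')^{\wedge}(n)|^2\big)^{1/2} < \infty$ by Cauchy–Schwarz. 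Absolute summability of the Fourier coefficients plus the fact that the Fourier series of a continuous function (which $g$ is) represents it in $L^2$, forces the series $\sum_n (g)^{\wedge}(n) e^{2\pi i y n}$ to converge uniformly, and its limit is a continuous function agreeing with $g$ a.e., hence everywhere. Substituting $(g)^{\wedge}(n)=(f)^{\wedge}(n)$ gives uniform convergence of $\sum_{m\in\mathcal{Z}}(f)^{\wedge}(m)e^{2\pi i ym}$ to $g$ on $[0,1]$.

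Finally, the "in particular" clause follows by evaluating at $y=0$: the left side is $g(0)=\sum_{m\in\mathcal{Z}}f(m)$ by definition of $g$, and the right side is $\sum_{m\in\mathcal{Z}}(f)^{\wedge}(m)$ since $e^{2\pi i \cdot 0 \cdot m}=1$; both series converge (the right one absolutely, by the estimate above, the left one as the hypothesis that $g$ is defined at $0$). The main obstacle I anticipate is the justification of the interchange of sum and integral in computing $(g)^{\wedge}(n)$ and, relatedly, making sure the periodization identity $(g)^{\wedge}(n)=(f)^{\wedge}(n)$ is valid with only $f\in L^1$ — this is where one must be careful to argue via $L^1$-convergence of the partial periodizations rather than pointwise manipulation. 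The passage from absolute summability of $(g)^{\wedge}$ to uniform convergence to $g$ is then routine (it is the content of Remark \ref{inversion}, or of Fejér's theorem combined with absolute convergence), and the verification $g(0)=1$-periodicity boundary conditions is immediate from the structure $f(y+m)$.
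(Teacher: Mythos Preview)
Your proof is correct and follows essentially the same approach as the paper: you carry out the computation from Theorem~\ref{poisson} to identify $(g)^{\wedge}(m)=(f)^{\wedge}(m)$ (with a more careful $L^{1}$ justification of the sum--integral interchange than the paper gives), and then invoke uniform convergence of the Fourier series of a $C^{2}$ periodic function, which the paper simply cites from \cite{dep2} or \cite{SS} while you supply the standard integration-by-parts and Cauchy--Schwarz argument explicitly. The paper's own proof is terser but structurally identical.
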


\begin{proof}

Following through the calculation in Theorem \ref{poisson}, we have that $g\in L^{1}([0,1])$, and $(g)^{\wedge}(m)=(f)^{\wedge}(m)$, for $m\in\mathcal{Z}$. Using the result of \cite{dep2} or \cite{SS}, we obtain the second part, the final claim is clear.

\end{proof}

\begin{lemma}
\label{sumformula}

Let $f$ be given by satisfying the conditions of Lemmas \ref{converse} and \ref{C2}, with respect to $h$, then;\\

$\sum_{m\in\mathcal{Z}}f(m)=\sum_{m\in\mathcal{Z}}h(m)$\\

\end{lemma}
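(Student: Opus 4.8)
The plan is simply to assemble the preceding lemmas in the right order. First I would record that, by Lemma \ref{C2}, $f\in C^{2}(\mathcal{R})$ and there is a constant $F$ with $|f(y)|\leq F/|y|^{2}$ for $|y|\geq 1$; combined with continuity on $[-1,1]$ this forces $f\in L^{1}(\mathcal{R})$, and the same decay bound together with the Weierstrass $M$-test shows that the periodization $g(y)=\sum_{m\in\mathcal{Z}}f(y+m)$ is defined for every $y\in[0,1]$. Thus $f$ and $g$ are exactly the objects of Definition \ref{sum3}.

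Next I would invoke Lemma \ref{smooth2} to conclude $g\in C^{2}[0,1]$. Now $f\in L^{1}(\mathcal{R})$, the series defining $g$ converges on $[0,1]$, and $g\in C^{2}[0,1]$, so the hypotheses of Lemma \ref{uniform2} are met and that lemma gives
\[
\sum_{m\in\mathcal{Z}}f(m)=\sum_{m\in\mathcal{Z}}(f)^{\wedge}(m).
\]

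Finally, $f$ is precisely the function constructed in Lemma \ref{converse}, namely $f=(h)^{\vee}$ with $h$ satisfying the hypotheses of Lemma \ref{inversionconditions}; hence that lemma yields $(f)^{\wedge}=h$ as functions on $\mathcal{R}$, and in particular $(f)^{\wedge}(m)=h(m)$ for every $m\in\mathcal{Z}$. Substituting this into the displayed identity gives $\sum_{m\in\mathcal{Z}}f(m)=\sum_{m\in\mathcal{Z}}h(m)$, where the right-hand series converges absolutely since $|h(m)|\leq C/|m|^{2}$ for $m\neq 0$ by the hypothesis of Lemma \ref{inversionconditions}.

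I do not expect a genuine obstacle here: the argument is bookkeeping. The only points that deserve a line of justification are (i) that $f$ actually satisfies all the hypotheses needed to apply Lemmas \ref{uniform2} and \ref{smooth2} — which is exactly what the decay estimate furnished by Lemma \ref{C2} is for — and (ii) that the identification $(f)^{\wedge}=h$ may be evaluated at integer points, which is the content of Lemma \ref{converse}.
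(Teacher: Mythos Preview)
Your proposal is correct and follows essentially the same route as the paper: use Lemma \ref{C2} (via Definition \ref{sum3} and Lemma \ref{smooth2}) to get $g\in C^{2}[0,1]$, apply Lemma \ref{uniform2} to obtain $\sum_{m}f(m)=\sum_{m}(f)^{\wedge}(m)$, and then invoke Lemma \ref{converse} for $(f)^{\wedge}=h$. The only difference is that you spell out the $L^{1}$-membership of $f$ and the absolute convergence of $\sum_{m}h(m)$, which the paper leaves implicit.
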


\begin{proof}
Using Lemmas \ref{converse} and \ref{C2}, we have that $g\in C^{2}[0,1]$, where $g$ is defined by \ref{smooth2}, and $(f)^{\wedge}(m)=h(m)$, for $m\in\mathcal{Z}$.  By Lemmas \ref{smooth2} and \ref{uniform2}, we have that;\\

$\sum_{m\in\mathcal{Z}}f(m)=\sum_{m\in\mathcal{Z}}(f)^{\wedge}(m)$\\

Hence;\\

$\sum_{m\in\mathcal{Z}}f(m)=\sum_{m\in\mathcal{Z}}h(m)$\\

as required.

\end{proof}

\begin{lemma}
\label{cot}
If $s\in\mathcal{Z}_{\geq 2}$, $s$ even, then;\\

$\sum_{n=1}^{\infty}{1\over n^{s}}={(-1)^{{s+2\over 2}}(2\pi)^{s}B_{s}\over 2(s!)}$\\

\end{lemma}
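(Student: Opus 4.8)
The plan is to route the computation through the Poisson summation results already established, applying Lemma \ref{uniform2} to the Poisson kernel (Cauchy density) in order to evaluate a $\coth$-type lattice sum, and then to extract the numbers $\sum_{n\geq 1}n^{-s}$ by matching two power-series expansions against the Bernoulli generating function.

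\textbf{Step 1: a cotangent sum via Poisson summation.} Fix $t>0$ and put $f(x)=\frac{t}{t^{2}+x^{2}}$. Then $f\in L^{1}(\mathcal{R})$, and a direct (or contour) computation gives $(f)^{\wedge}(m)=\pi e^{-2\pi t|m|}$ for $m\in\mathcal{Z}$. The periodization $g(y)=\sum_{m\in\mathcal{Z}}f(y+m)$ is defined on $[0,1]$; since the $m$-th terms of $\sum_{m}f'(y+m)$ and $\sum_{m}f''(y+m)$ are $O(m^{-3})$ and $O(m^{-4})$, these series converge uniformly on $[0,1]$, so $g\in C^{2}(0,1)$, and the index shift $g'_{+}(0)=\sum_{m}f'(m)=\sum_{m}f'(m+1)=g'_{-}(1)$ (with the analogous matching for $g$ itself) gives $g\in C^{2}[0,1]$. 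Hence Lemma \ref{uniform2} applies and
\[
\sum_{m\in\mathcal{Z}}\frac{t}{t^{2}+m^{2}}=\pi\sum_{m\in\mathcal{Z}}e^{-2\pi t|m|}=\pi\,\frac{1+e^{-2\pi t}}{1-e^{-2\pi t}}=\pi\coth(\pi t).
\]

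\textbf{Step 2: two expansions near $t=0$.} Multiplying by $t$ and splitting off $m=0$ gives $\pi t\coth(\pi t)=1+2\sum_{m\geq1}\frac{t^{2}}{t^{2}+m^{2}}$. For $0<|t|<1$, expanding $\frac{t^{2}}{t^{2}+m^{2}}=\sum_{k\geq1}(-1)^{k-1}t^{2k}m^{-2k}$ and interchanging sums (legitimate by absolute convergence, as $\sum_{m,k}t^{2k}m^{-2k}<\infty$ for $|t|<1$) yields $\pi t\coth(\pi t)=1+2\sum_{k\geq1}(-1)^{k-1}\big(\sum_{m\geq1}m^{-2k}\big)t^{2k}$. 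On the other hand, from $\frac{z}{e^{z}-1}=\sum_{k\geq0}\frac{B_{k}}{k!}z^{k}$, adding $\frac{z}{2}$ to symmetrize gives $\frac{z}{2}\coth\frac{z}{2}=\sum_{k\geq0}\frac{B_{2k}}{(2k)!}z^{2k}$, and substituting $z=2\pi t$ gives $\pi t\coth(\pi t)=\sum_{k\geq0}\frac{B_{2k}}{(2k)!}(2\pi t)^{2k}$.

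\textbf{Step 3: comparison.} Both series have radius of convergence $1$ about $t=0$, so equating the coefficient of $t^{2k}$ for $k\geq1$ gives $2(-1)^{k-1}\sum_{m\geq1}m^{-2k}=\frac{(2\pi)^{2k}B_{2k}}{(2k)!}$. Writing $s=2k$ and using $(-1)^{k-1}=(-1)^{k+1}=(-1)^{(s+2)/2}$, this is exactly
\[
\sum_{n=1}^{\infty}\frac{1}{n^{s}}=\frac{(-1)^{(s+2)/2}(2\pi)^{s}B_{s}}{2(s!)}.
\]

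\textbf{Main obstacle.} The only genuinely delicate point is Step 1: checking that the periodization $g$ of the non-Schwartz, merely $O(x^{-2})$-decaying function $f$ lies in $C^{2}[0,1]$ so that Lemma \ref{uniform2} is truly applicable, and pinning down $(f)^{\wedge}(m)=\pi e^{-2\pi t|m|}$ together with the convergence of $\sum_{m}(f)^{\wedge}(m)$. Everything after that is a routine comparison of two convergent power series, needing only the Bernoulli generating function as a known identity. (One could instead quote the partial-fraction expansion of $\pi\cot\pi z$ directly and skip Poisson summation, but deriving the $\coth$ sum from Lemma \ref{uniform2} keeps the argument within the framework already developed here.)
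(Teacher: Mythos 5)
Your argument is correct, but it is a genuinely different route from the paper's: the paper gives no internal proof of Lemma \ref{cot} at all, simply citing \cite{SS}, whereas you derive the evaluation of $\zeta(s)$ for even $s$ from the paper's own machinery, applying Lemma \ref{uniform2} to the Cauchy kernel $f(x)=t/(t^{2}+x^{2})$ to get $\sum_{m\in\mathcal{Z}}t/(t^{2}+m^{2})=\pi\coth(\pi t)$, and then matching the $t^{2k}$-coefficients against the Bernoulli generating function. This buys self-containedness and fits the spirit of the paper (it is essentially the same Poisson-summation mechanism the paper develops for $g_{s,r}$, and your verification that the periodization of the merely $O(x^{-2})$-decaying $f$ lies in $C^{2}[0,1]$ mirrors the proof of Lemma \ref{smooth2}), at the cost of importing two standard external facts: the transform $(f)^{\wedge}(m)=\pi e^{-2\pi t|m|}$ (a contour or ODE computation not covered by the paper's lemmas) and the generating function $z/(e^{z}-1)=\sum_{k\geq 0}B_{k}z^{k}/k!$ together with the vanishing of the odd Bernoulli numbers; the citation to \cite{SS} buys brevity but leaves the lemma as a black box. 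Two small points you should tighten: the identity $\sum_{m}t/(t^{2}+m^{2})=\pi\coth(\pi t)$ is obtained only for $t>0$, so the coefficient comparison should be justified by agreement of the two power series on $(0,1)$ plus the identity theorem (or evenness in $t$), rather than by the bare statement about radii of convergence; and you should note that the Bernoulli convention implicit in the generating function ($B_{2}=1/6$, $B_{4}=-1/30$) is the one under which the stated sign $(-1)^{(s+2)/2}$ checks out, e.g.\ it reproduces $\zeta(2)=\pi^{2}/6$ and $\zeta(4)=\pi^{4}/90$ as used elsewhere in the paper.
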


\begin{proof}
The proof of this result can be found in \cite{SS}.
\end{proof}

\begin{defn}
\label{h}
If $s\in\mathcal{C}$, with $Re(s)\geq 4$, and $r\in\mathcal{Z}_{\geq 1}$, we define;\\

$h_{s,r}(x)={1\over x^{s}}$,  $(x\geq r)$\\

$h_{s,r}(x)={(-1)^{s}\over x^{s}}={e^{-i\pi s}\over x^{s}}$, $(x\leq -r)$\\

\end{defn}

\begin{rmk}
\label{symmetry}
$h_{s,r}$ is symmetric, that is $h_{s,r}(x)=h_{s,r}(-x)$, for $|x|\geq r$.

\end{rmk}

\begin{lemma}
\label{polynomial}

There exists a polynomial $p_{s,r}$ of degree $2r+3$ , with the properties;\\

$(i)$. $p_{s,r}$ is symmetric, that is $p_{s,r}(x)=p_{s,r}(-x)$, for $x\in\mathcal{R}$.\\

$(ii)$. $p_{s,r}(n)={1\over n^{s}}$, for $1\leq n\leq r$.\\

$(iii)$. $p_{s,r}^{(k)}(r)=h_{s,r}^{(k),+}(r)$, $(0\leq k\leq 2)$\\

$(iv)$. $p_{s,r}^{(k)}(-r)=h_{s,r}^{(k),-}(-r)$, $(0\leq k\leq 2)$\\

\end{lemma}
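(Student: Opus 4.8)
The plan is to construct $p_{s,r}$ explicitly by Hermite interpolation. We need a polynomial that matches prescribed values at the $2r-2$ points $\pm 2,\dots,\pm r$ (condition $(ii)$ applied symmetrically, noting $n=1$ gives $p_{s,r}(\pm1)=1$ as well, so really $2r$ point conditions at $\pm1,\dots,\pm r$), together with prescribed $0$th, $1$st and $2$nd derivative data at the two endpoints $\pm r$. Counting: the Hermite conditions at $\pm r$ give $6$ constraints, but the value conditions at $\pm r$ overlap with $(ii)$, so the independent data is: values at $\pm1,\dots,\pm(r-1)$ ($2r-2$ conditions), and the full $2$-jet at $\pm r$ ($6$ conditions), for a total of $2r+4$ conditions. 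A generic polynomial of degree $2r+3$ has $2r+4$ coefficients, so the interpolation problem is square and has a unique solution $q$; the content of the lemma is that this $q$ can be taken \emph{even}.

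First I would set up the symmetric Hermite interpolation. Because all the prescribed data is symmetric under $x\mapsto -x$ — the values satisfy $p(\pm n) = 1/n^s$, and by Remark \ref{symmetry} together with the chain rule $h_{s,r}^{(k),-}(-r) = (-1)^k h_{s,r}^{(k),+}(r)$, so $h_{s,r}^{(0),-}(-r)=h_{s,r}^{(0),+}(r)$, $h_{s,r}^{(1),-}(-r)=-h_{s,r}^{(1),+}(r)$, $h_{s,r}^{(2),-}(-r)=h_{s,r}^{(2),+}(r)$ — the data at $-x$ is exactly the data that an even polynomial would be forced to have at $-x$ given its data at $x$. Concretely: if $q(x)$ is the unique solution of the interpolation problem, then $q(-x)$ solves the same problem (one checks the conditions go to themselves under $x \mapsto -x$ using the sign computations just given), so by uniqueness $q(-x) = q(x)$, i.e. $q$ is even. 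This is the cleanest route: prove existence and uniqueness of the interpolant, then invoke uniqueness against the reflection to get symmetry for free.

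For the existence/uniqueness step I would argue that the $(2r+4)\times(2r+4)$ linear system is nonsingular. The standard way: if a polynomial $q$ of degree $\le 2r+3$ satisfies all the \emph{homogeneous} conditions (vanishing values at $\pm1,\dots,\pm(r-1)$ and vanishing $2$-jet at $\pm r$), then $(x^2-1^2)\cdots(x^2-(r-1)^2)(x^2-r^2)^3$ divides $q$; but that product has degree $2(r-1)+6 = 2r+4 > 2r+3 = \deg q$, forcing $q\equiv 0$. Hence the system has a unique solution, completing the argument; conditions $(i)$–$(iv)$ then hold by construction, with $(i)$ coming from the symmetry argument of the previous paragraph. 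One should double-check that $\deg p_{s,r} = 2r+3$ exactly (not less) is what the statement asserts — if the leading coefficient can vanish for special $s$, one reads "degree $2r+3$" as "degree at most $2r+3$", which is the harmless standard convention, and nothing downstream is affected.

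The main obstacle is bookkeeping rather than mathematics: getting the count of independent conditions right (the overlap between the value condition at $\pm r$ from $(ii)$ and the $0$th-order Hermite condition at $\pm r$ from $(iii)$–$(iv)$ must be reconciled, and one must confirm these agree, i.e. $1/r^s = h_{s,r}^{(0),+}(r)$, which is immediate from Definition \ref{h}), and verifying that the reflection $x\mapsto -x$ genuinely permutes the interpolation data to itself — the only subtle sign is at the first derivative at the endpoints, where both the reflection of $q$ and the prescribed datum $h_{s,r}^{(1),-}(-r)$ pick up the same factor $-1$, so consistency holds. Once those two checks are in place, the divisibility/degree argument closes everything.
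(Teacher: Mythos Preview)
Your argument is correct, but it proceeds differently from the paper. The paper builds evenness in from the start: it writes $p_{s,r}(x)=\sum_{j=0}^{r+1}a_{j}x^{2j}$ and sets up an $(r+2)\times(r+2)$ linear system $\overline{A}_{r}\overline{a}_{s,r}=\overline{b}_{s,r}$ whose rows are evaluation at $1,\dots,r$ together with first and second derivative at $r$; it then simply asserts $\det(\overline{A}_{r})\neq 0$ and reads off $(i)$--$(iv)$, with $(iv)$ following from $(iii)$ by the built-in symmetry. You instead work in the full space of polynomials of degree $\leq 2r+3$, impose the data at both $\pm n$, prove nonsingularity by the divisibility/degree argument, and recover evenness a posteriori from uniqueness and the reflection-invariance of the data. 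Your route has the advantage of actually justifying the nonsingularity step that the paper leaves unproved (indeed your divisibility argument, transported to the even setting, is exactly what is needed to show $\det(\overline{A}_{r})\neq 0$). The paper's route has the advantage of producing the smaller $(r+2)\times(r+2)$ system explicitly, which matters downstream since later lemmas (e.g.\ Lemma~\ref{totalsum}) use the coefficients $(\overline{a}_{s,r})_{j}$ directly. Note also that both constructions yield an even polynomial of degree at most $2(r+1)=2r+2$; the ``degree $2r+3$'' in the statement should be read as a bound, and your closing caveat about this is well placed.
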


\begin{proof}

We let, for $1\leq j\leq 1+r$, $1\leq k\leq r$;\\

$\overline{A}_{r}=
\begin{pmatrix}
1&1&\ldots&1&\ldots&1&\\
1&2^{2}&\ldots&2^{2j}&\ldots&2^{2(r+1)}&\\
\ldots&\\
1&k^{2}&\ldots&k^{2j}&\ldots&k^{2(r+1)}&\\
\ldots&\\
1&r^{2}&\ldots&r^{2j}&\ldots&r^{2(r+1)}&\\
0&2r&\ldots&2jr^{2j-1}&\ldots&2(r+1)r^{2r+1}&\\
0&2&\ldots&2j(2j-1)r^{2j-2}&\ldots&2(r+1)(2r+1)r^{2r}&\\
\end{pmatrix}\\
$
\ \ \ \ \ \\

$\overline{b}_{s,r}=
\begin{pmatrix}
1^{-s}\\
2^{-s}\\
\ldots\\
k^{-s}\\
\ldots\\
r^{-s}\\
-sr^{-(1+s)}\\
s(s+1)r^{-(2+s)}\\
\end{pmatrix}\\
$

We have that $det(\overline{A}_{r})\neq 0$, hence, we can solve the equation $\overline{A}_{r}(\overline{a}_{s,r})=\overline{b}_{s,r}$. Let $p_{s,r}(x)=\sum_{j=0}^{r+1}(\overline{a}_{s,r})_{(j+1)}x^{2j}$. We have, by construction, that $p_{s,r}(-x)=p_{s,r}(x)$, and
 $p_{s,r}^{(k)}(r)=h_{s,r}^{(k),+}(r)$. As both $p_{s,r}$ and $h_{s,r}$  are symmetric, we also have that, $p_{s,r}^{(k)}(r)=h_{s,r}^{(k),-}(-r)$, as required.

 \end{proof}

 \begin{defn}
 \label{g}

 We define;\\

 $g_{s,r}(x)=h_{s,r}(x)$, (if $|x|\geq r$)\\

 $g_{s,r}(x)=p_{s,r}(x)$, (if $|x|\leq r$)\\

 \end{defn}

\begin{lemma}
\label{hypotheses}

We have that $g_{s,r}\in C^{2}(\mathcal{R})$, $g_{s,r}$ is symmetric, and, moreover, the hypotheses of Lemmas \ref{inversionconditions} and \ref{C2} hold for $g_{s,r}$.

\end{lemma}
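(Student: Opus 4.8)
The plan is to verify the three assertions in turn, each of which follows quickly from the piecewise description of $g_{s,r}$: it equals $h_{s,r}$ — a constant multiple of a power of $|x|$ on each of the rays $[r,\infty)$ and $(-\infty,-r]$ — on $\{|x|\geq r\}$, and equals the polynomial $p_{s,r}$ on $\{|x|\leq r\}$, the two pieces being glued at $x=\pm r$. For the regularity, note that on $\{|x|>r\}$ we have $g_{s,r}=h_{s,r}$, which on each ray is (up to a constant of modulus depending only on $s$) a power of $|x|$ and hence real-analytic there, since $r\geq 1>0$; on $(-r,r)$, $g_{s,r}=p_{s,r}$ is a polynomial, hence $C^{\infty}$. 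So the only issue is $x=\pm r$, where I would argue that $g_{s,r}$ is a gluing of two functions that are $C^{2}$ up to the junction: at $x=r$ the right-hand derivatives of orders $k=0,1,2$ are $h_{s,r}^{(k),+}(r)$ and the left-hand ones are $p_{s,r}^{(k)}(r)$, and these agree by Lemma \ref{polynomial}(iii); likewise at $x=-r$ by Lemma \ref{polynomial}(iv). Hence $g_{s,r}\in C^{2}(\mathcal{R})$, and in particular $g_{s,r}$ is well defined, the $k=0$ matchings covering that.

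Symmetry is immediate: for $|x|\geq r$, $g_{s,r}(x)=h_{s,r}(x)=h_{s,r}(-x)=g_{s,r}(-x)$ by Remark \ref{symmetry}, and for $|x|\leq r$, $g_{s,r}(x)=p_{s,r}(x)=p_{s,r}(-x)=g_{s,r}(-x)$ by Lemma \ref{polynomial}(i).

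For the growth conditions of Lemmas \ref{inversionconditions} and \ref{C2}, I would produce constants $C,D\in\mathcal{R}$ with $\sup_{x\in\mathcal{R}}|x|^{2}|g_{s,r}^{(k)}(x)|\leq C$ and $\sup_{x\in\mathcal{R}}|x|^{4}|g_{s,r}^{(k)}(x)|\leq D$ for $k=0,1,2$, splitting $\mathcal{R}$ at $|x|=r$. On the compact interval $[-r,r]$ the continuous functions $p_{s,r},p_{s,r}',p_{s,r}''$ are bounded, say by $M$, so $|x|^{j}|g_{s,r}^{(k)}(x)|\leq r^{j}M$ there for $j\in\{2,4\}$. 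For $|x|\geq r$, differentiating the power function on each ray yields a constant $K_{s}$, depending only on $s$, with $|g_{s,r}^{(k)}(x)|\leq K_{s}|x|^{-Re(s)-k}$; since $Re(s)\geq 4$ and $|x|\geq r\geq 1$, both $|x|^{2}|g_{s,r}^{(k)}(x)|$ and $|x|^{4}|g_{s,r}^{(k)}(x)|$ are then at most $K_{s}|x|^{4-Re(s)}\leq K_{s}$. Taking $C=D=\max(r^{4}M,K_{s})$ completes the verification.

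I do not anticipate any real obstacle. The one place where the earlier work is essential is the $C^{2}$-matching at $x=\pm r$, which is exactly what conditions (iii)--(iv) of Lemma \ref{polynomial} were arranged to provide; the estimates are routine, the only point worth flagging being that the hypothesis $Re(s)\geq 4$ of Definition \ref{h} is precisely what is needed for the $|x|^{4}$-bound of Lemma \ref{C2} (for Lemma \ref{inversionconditions}, $Re(s)\geq 2$ would already do). A small care-point is the interpretation of $x\mapsto x^{-s}$ on the negative half-line, so that $h_{s,r}$ genuinely is a constant times $|x|^{-s}$ there, which is what makes both the smoothness on $(-\infty,-r]$ and the modulus bounds transparent.
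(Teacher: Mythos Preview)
Your proposal is correct and follows essentially the same route as the paper: $C^{2}$ from the matching conditions (iii)--(iv) of Lemma~\ref{polynomial}, symmetry from (i) together with Remark~\ref{symmetry}, and the decay bounds by splitting at $|x|=r$, using boundedness of the polynomial piece on the compact interval and $|x|^{-Re(s)-k}$ decay of the power piece with $Re(s)\geq 4$. The only difference is cosmetic: the paper writes out explicit constants in terms of $|s|$ and $\|\overline{a}_{s,r}\|$, whereas you invoke compactness to get an unspecified $M$; both arrive at a single constant serving as $C=D$.
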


\begin{proof}

The fact that $g_{s,r}\in C^{2}(\mathcal{R})$ follows immediately from Conditions $(iii)$ and $(iv)$ of Lemma \ref{polynomial}. The symmetry condition is a consequence of Condition $(i)$.  If $x\geq r$, we have that;\\

 $|g_{s,r}(x)|\leq |x^{-Re(s)}||x^{-Im(s)}|\leq |x|^{-4}$\\

Hence, as $g_{s,r}$ is symmetric, $|g_{s,r}(x)|\leq |x|^{-4}$, for $|x|\geq r$.\\

If $|x|\leq r$;\\

$|g_{s,r}(x)|=|p_{s,r}(x)|\leq r^{2r+2}\sum_{j=0}^{r+1}|(\overline{a}_{s,r})_{(j+1)}|\leq r^{2r+2}\sqrt{r+2}||\overline{a}_{s,r}||$\\

It follows that $sup_{x\in\mathcal{R}}(|x|^{4}|g_{s,r}(x)|)\leq max(1, r^{2r+6}\sqrt{r+2}||\overline{a}_{s,r}||)$. Similarly, as $g'_{s,r}(x)={-s\over x^{s+1}}$, $g''_{s}(x)={s(s+1)\over x^{s+2}}$, $|x|>r$, then, if $|x|>r$, we have that;\\

 $|g'_{s,r}(x)|\leq  |s||x|^{-5}$\\

 $|g''_{s,r}(x)|\leq  |s||s-1||x|^{-6}$\\

and, if $|x|\leq r$;\\

$|g'_{s,r}(x)|=|p'_{s,r}(x)|\leq r^{2r+2}(\sum_{j=1}^{r+1}|2j(\overline{a}_{s,r})_{(j+1)}|)\leq 2(r+1)r^{2r+2}\sqrt{r+2}||\overline{a}_{s,r}||$\\

$|g''_{s}(x)|=|p''_{s}(x)|\leq r^{2r+2}(\sum_{j=1}^{r+1}|2j(2j-1)(\overline{a}_{s,r})_{(j+1)}|)\leq (2r+2)(2r+1)r^{2r+2}\sqrt{r+2}||\overline{a}_{s}||$\\

so that;\\

$sup_{x\in\mathcal{R}}(|x|^{5}|g'_{s,r}(x)|)\leq max(|s|, 2(r+1)r^{2r+7}\sqrt{r+2}||\overline{a}_{s}||)$\\

$sup_{x\in\mathcal{R}}(|x|^{6}|g''_{s,r}(x)|)\leq max(|s||s-1|, (2r+2)(2r+1)r^{2r+8}\sqrt{r+2}||\overline{a}_{s}||)$ $(*)$\\

It follows that Lemmas \ref{inversionconditions} and \ref{C2} holds for $g_{s,r}$, with $C=D=max(|s||s-1|,(2r+2)(2r+1)r^{2r+8}\sqrt{r+2}||\overline{a}_{s}||$.\\

\end{proof}

\begin{defn}
\label{f}
We let $f_{s,r}(y)=\int_{\mathcal{R}}g_{s,r}(x)e^{2\pi ixy}dx$.\\

$R_{s,r,1}=\sum_{n\in\mathcal{Z}_{\neq 0}}(\int_{r}^{\infty}{e^{2\pi i nx}\over x^{s}}dx)$\\

$R_{s,r,2}=\sum_{n\in\mathcal{Z}_{\neq 0}}(\int_{r}^{\infty}{e^{-2\pi i nx}\over x^{s}}dx)$\\

$R_{s,r}=R_{s,r,1}+R_{s,r,2}$\\

$P_{s,r,1}=\sum_{n\in\mathcal{Z}_{\neq 0}}(\int_{0}^{r}p_{s,r}(x)e^{2\pi i nx}dx)$\\

$P_{s,r,2}=\sum_{n\in\mathcal{Z}_{\neq 0}}(\int_{0}^{r}p_{s,r}(x)e^{-2\pi i nx}dx)$\\

$P_{s,r}=P_{s,r,1}+P_{s,r,2}$\\

 \end{defn}

\begin{lemma}
\label{conclusions}
We have that $f_{s,r}$ is symmetric, and $f_{s,r}$ satisfies the conclusions of Lemmas \ref{converse} and \ref{C2}. Moreover;\\

$f_{s,r}(0)+P_{s,r}+R_{s,r}=p_{s,r}(0)+2\sum_{n=1}^{\infty}{1\over n^{s}}$\\

\end{lemma}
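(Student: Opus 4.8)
The plan is to apply the Poisson-type summation machinery (Lemma~\ref{sumformula}, or equivalently Lemmas~\ref{smooth2} and \ref{uniform2}) to $f_{s,r}$, and then to unwind the right-hand side of the summation identity into its three pieces: the polynomial contribution near the origin, the tail contribution from $h_{s,r}(x)=\pm x^{-s}$ for $|x|\geq r$, and the diagonal term $(f_{s,r})^{\wedge}(0)$. First I would invoke Lemma~\ref{hypotheses}, which tells us $g_{s,r}$ satisfies the hypotheses of Lemmas~\ref{inversionconditions} and \ref{C2}; hence by Lemma~\ref{converse} we have $(f_{s,r})^{\wedge}=g_{s,r}$, and by Lemma~\ref{C2} the function $f_{s,r}$ has the required $C^2$ decay. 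This already gives the first sentence of the statement (symmetry of $f_{s,r}$ follows since $g_{s,r}$ is symmetric, so its inverse transform is symmetric — this is the observation $(f)^\vee=(f_-)^\wedge$ from Remark~\ref{inversion} together with Remark~\ref{symmetry}).

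Next I would write down the summation identity. By Lemma~\ref{sumformula} applied to the pair $(f_{s,r},g_{s,r})$,
$$\sum_{m\in\mathcal{Z}}f_{s,r}(m)=\sum_{m\in\mathcal{Z}}g_{s,r}(m).$$
The right-hand side I split as $g_{s,r}(0)+\sum_{m\neq 0}g_{s,r}(m)$. For $1\leq |m|\leq r$, property $(ii)$ of Lemma~\ref{polynomial} and symmetry give $g_{s,r}(m)=p_{s,r}(m)=|m|^{-s}$; for $|m|\geq r$ (here one must be slightly careful at $m=\pm r$, where $g_{s,r}$ is defined by $h_{s,r}$ but agrees with $p_{s,r}(r)=r^{-s}$ by property $(iii)$), we have $g_{s,r}(m)=m^{-s}$ if $m>0$ and $g_{s,r}(m)=(-1)^s m^{-s}=|m|^{-s}$ by Definition~\ref{h}. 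So all nonzero integer terms combine to $2\sum_{n=1}^{\infty}n^{-s}$, and $g_{s,r}(0)=p_{s,r}(0)$. This yields
$$\sum_{m\in\mathcal{Z}}f_{s,r}(m)=p_{s,r}(0)+2\sum_{n=1}^{\infty}\frac{1}{n^{s}}.$$

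It remains to identify $\sum_{m\in\mathcal{Z}}f_{s,r}(m)$ with $f_{s,r}(0)+P_{s,r}+R_{s,r}$. The idea is that $f_{s,r}(m)=\int_{\mathcal{R}}g_{s,r}(x)e^{2\pi ixm}dx$, and we split the integral as $\int_{-r}^{r}p_{s,r}(x)e^{2\pi ixm}dx+\int_{|x|\geq r}h_{s,r}(x)e^{2\pi ixm}dx$; the second integral splits further into $\int_{r}^{\infty}x^{-s}e^{2\pi ixm}dx+\int_{-\infty}^{-r}(-1)^s x^{-s}e^{2\pi ixm}dx$, and substituting $x\mapsto -x$ in the last integral turns it into $\int_r^\infty x^{-s}e^{-2\pi ixm}dx$ (using $(-1)^s(-x)^{-s}=x^{-s}$ with the branch convention $(-1)^s=e^{-i\pi s}$ from Definition~\ref{h}). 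Summing over $m\neq 0$ and recognizing the polynomial piece over $[-r,r]$ as twice the piece over $[0,r]$ by symmetry of $p_{s,r}$, the $m\neq 0$ terms assemble exactly into $P_{s,r,1}+P_{s,r,2}+R_{s,r,1}+R_{s,r,2}=P_{s,r}+R_{s,r}$, while the $m=0$ term is $f_{s,r}(0)$. The main obstacle is justifying the interchange of summation over $m$ with the integrals defining the $R$ and $P$ series — i.e.\ that $\sum_{m\neq 0}f_{s,r}(m)$ genuinely decomposes as $P_{s,r}+R_{s,r}$ as a convergent rearrangement. This is controlled by the decay $|f_{s,r}(m)|\leq F|m|^{-2}$ from Lemma~\ref{C2} (so $\sum_m f_{s,r}(m)$ converges absolutely), together with the absolute convergence of the $R$ and $P$ series, which in turn follows from integration-by-parts estimates on $\int_r^\infty x^{-s}e^{\pm2\pi inx}dx=O(n^{-1})$ — wait, that only gives $O(n^{-1})$, which is not summable, so one must integrate by parts twice (using $\mathrm{Re}(s)\geq 4$) to get $O(n^{-2})$; the same remark applies to the $P_{s,r}$ terms via two integrations by parts on the polynomial, noting the boundary terms at $x=r$ cancel against the boundary terms of the $R$ integrals by the $C^2$-matching conditions $(iii)$. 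Assembling the three pieces and rearranging gives the displayed formula.
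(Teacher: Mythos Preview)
Your proof is correct and follows essentially the same route as the paper: invoke Lemma~\ref{hypotheses} to verify the hypotheses, deduce $(f_{s,r})^{\wedge}=g_{s,r}$ and the $C^{2}$ decay via Lemmas~\ref{converse} and \ref{C2}, establish symmetry of $f_{s,r}$ from symmetry of $g_{s,r}$, apply the summation identity $\sum_{m}f_{s,r}(m)=\sum_{m}g_{s,r}(m)$ from Lemma~\ref{sumformula}, and identify both sides. The paper's proof is terser on the left-hand side --- it simply writes $f_{s,r}(0)+P_{s,r}+R_{s,r}=f_{s,r}(0)+\sum_{n\neq 0}f_{s,r}(n)$ without spelling out the splitting of the integral or the convergence of the individual $P$ and $R$ series; the two-fold integration by parts you anticipate is exactly what is carried out in the subsequent Lemma~\ref{approx} to show $R_{s,r}$ converges.
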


\begin{proof}

The second part follows immediately from Definition \ref{f}, and Lemmas \ref{hypotheses}, \ref{converse} and \ref{C2}. It follows that $f_{s,r}\in L^{1}(\mathcal{R})$, and;\\

$f_{s,r}(-y)=\int_{\mathcal{R}}g_{s,r}(x)e^{-2\pi ixy}dx$\\

$=\int_{\mathcal{R}}g_{s,r}(-x)e^{2\pi ixy}dx$\\

$=\int_{\mathcal{R}}g_{s,r}(x)e^{2\pi ixy}dx$\\

$=f_{s,r}(y)$\\

Hence, $f_{s,r}$ is symmetric. By Lemma \ref{sumformula} , we have that;\\

$\sum_{n\in\mathcal{Z}}f_{s,r}(n)=\sum_{n\in\mathcal{Z}}g_{s,r}(n)$\\

As both $f_{s,r}$ and $g_{s,r}$ are symmetric, using Definition \ref{g} and property $(ii)$ of Lemma \ref{polynomial}, we obtain;\\

$f_{s,r}(0)+P_{s,r}+R_{s,r}$\\

$=f_{s,r}(0)+\sum_{n\in\mathcal{Z}_{\neq 0}}f_{s,r}(n)$\\

$=g_{s,r}(0)+2(\sum_{n=1}^{\infty}g_{s,r}(n))$\\

$=p_{s,r}(0)+2(\sum_{n=1}^{\infty}{1\over n^{s}})$\\

\end{proof}

\begin{lemma}
\label{approx}
We have that;\\

$|R_{s,r}|\leq {2|s|^{2}\over 3(Re(s)+1)r^{Re(s)+1}}$\\

$P_{s,r}=p_{s,r}(0)+p_{s,r}(r)+2\sum_{l=1}^{r-1}p_{s,r}(l)-2\int_{0}^{r}p_{s,r}(x)dx$\\

\end{lemma}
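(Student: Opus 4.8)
The plan is to handle the two assertions separately, as they are really two independent estimates dressed up in one statement.

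For the bound on $|R_{s,r}|$, the idea is to control each integral $\int_{r}^{\infty}{e^{\pm 2\pi inx}\over x^{s}}dx$ by integration by parts (twice), exploiting the oscillation of $e^{\pm 2\pi inx}$. First I would integrate by parts once, differentiating $x^{-s}$ and integrating the exponential, which produces a factor ${1\over 2\pi in}$ and a boundary term of size $O(r^{-Re(s)}/n)$; integrating by parts a second time yields a factor ${1\over(2\pi in)^{2}}$ with the integrand now involving $s(s+1)x^{-(s+2)}$, which is absolutely integrable on $[r,\infty)$ with $\int_{r}^{\infty}|s(s+1)|x^{-(Re(s)+2)}dx = {|s(s+1)|\over (Re(s)+1)r^{Re(s)+1}}$. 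Collecting the surviving terms after the cancellation of the first boundary contributions (the boundary term at $x=r$ from the first integration by parts is itself killed by a further integration by parts, or one simply bounds everything crudely), one gets each integral bounded by a constant times $|s|^{2}/\big(n^{2}(Re(s)+1)r^{Re(s)+1}\big)$; summing $\sum_{n\neq 0}{1\over n^{2}} = {\pi^{2}\over 3}$ over $\mathcal{Z}_{\neq 0}$ and tracking the constants produces the claimed ${2|s|^{2}\over 3(Re(s)+1)r^{Re(s)+1}}$. The main obstacle here is bookkeeping: being careful enough with the boundary terms and the precise numerical constants so that the final bound comes out with the stated coefficient $2/3$ rather than some larger constant; the clean way is to note the first boundary term telescopes to zero upon a third integration by parts, leaving only the doubly-integrated tail.

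For the formula for $P_{s,r}$, the observation is that $P_{s,r} = \sum_{n\in\mathcal{Z}_{\neq 0}}\big(\int_{0}^{r}p_{s,r}(x)e^{2\pi inx}dx + \int_{0}^{r}p_{s,r}(x)e^{-2\pi inx}dx\big) = \sum_{n\in\mathcal{Z}_{\neq 0}}\int_{0}^{r}p_{s,r}(x)e^{2\pi inx}dx$, i.e.\ the sum over all nonzero integers $n$ of the $(-n)$-th Fourier coefficient of $p_{s,r}$ on the interval $[0,r]$ (rescaled). The natural tool is the Poisson summation / Dirichlet kernel identity: for a $C^{1}$ function $\phi$ on $[0,r]$, $\sum_{n\in\mathcal{Z}}\int_{0}^{r}\phi(x)e^{2\pi inx}dx$ equals ${1\over 1}\big(\text{sum of }\phi\text{ at the integer points of }[0,r]\text{, with endpoints weighted }{1\over2}\big)$ — concretely $\tfrac12\phi(0)+\tfrac12\phi(r)+\sum_{l=1}^{r-1}\phi(l)$ when the period is taken to be $r$ an integer and one evaluates the periodized function. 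Applying this with $\phi = p_{s,r}$ and isolating the $n=0$ term $\int_{0}^{r}p_{s,r}(x)dx$, one rearranges to get $P_{s,r} = 2\big(\tfrac12 p_{s,r}(0)+\tfrac12 p_{s,r}(r)+\sum_{l=1}^{r-1}p_{s,r}(l)\big) - 2\int_{0}^{r}p_{s,r}(x)dx = p_{s,r}(0)+p_{s,r}(r)+2\sum_{l=1}^{r-1}p_{s,r}(l)-2\int_{0}^{r}p_{s,r}(x)dx$, which is exactly the claim. The factor of $2$ throughout comes from symmetrizing the $e^{2\pi inx}$ and $e^{-2\pi inx}$ contributions, or equivalently from summing over positive and negative $n$; I would double-check the endpoint weights by a direct small case ($r=1$) to make sure no factor of $2$ is misplaced.

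One technical point worth flagging: the interchange of $\sum_{n}$ and $\int_{0}^{r}$ needed to even make sense of $P_{s,r}$ and to apply the summation formula requires that the Fourier series of the periodization of $p_{s,r}$ converge nicely, which follows since $p_{s,r}$ is a polynomial, hence $C^{\infty}$ on $[0,r]$, so its Fourier coefficients decay faster than any power — the relevant convergence statements are precisely those quoted in Remark \ref{inversion} and used in Lemma \ref{uniform2}. Thus the hardest part is not any deep estimate but rather assembling the integration-by-parts bound for $R_{s,r}$ with the correct constants; the $P_{s,r}$ identity is essentially a direct citation of the Poisson/Dirichlet-kernel evaluation applied to a polynomial.
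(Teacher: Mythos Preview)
Your plan for $P_{s,r}$ is essentially the paper's argument: the paper breaks $[0,r]$ into unit intervals $[l,l+1]$, shifts to $[0,1]$, and invokes the pointwise Fourier convergence $\sum_{n\in\mathcal{Z}}(p_{s,r}^{l})^{\vee}(n)=\tfrac{1}{2}(p_{s,r}(l)+p_{s,r}(l+1))$ on each piece, then telescopes. Your ``Poisson/Dirichlet identity on $[0,r]$'' is the same computation packaged globally. (One cosmetic slip: your displayed equality $P_{s,r}=\sum_{n\neq 0}\int_{0}^{r}p_{s,r}e^{2\pi inx}\,dx$ drops a factor of $2$; the reindexing $n\mapsto -n$ shows $P_{s,r,1}=P_{s,r,2}$, so $P_{s,r}=2P_{s,r,1}$, which you in fact use at the end.)

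For $R_{s,r}$ the overall strategy---two integrations by parts, then sum $\sum_{n\neq 0}1/n^{2}$---matches the paper, but there is a real gap in how you dispose of the first boundary term. After one integration by parts the boundary contribution is $\dfrac{-e^{2\pi i n r}}{2\pi i n\, r^{s}}$, of size $\sim 1/|n|$. Your two suggestions for dealing with it both fail: a ``crude bound'' gives $\sum_{n\neq 0}1/|n|=\infty$, and a ``third integration by parts'' acts on the remaining integral, not on this already-evaluated boundary term, so it cannot remove the $1/n$ piece. The paper's mechanism is different and uses a fact you never invoke: since $r\in\mathcal{Z}_{\geq 1}$ one has $e^{2\pi i n r}=1$, so the boundary terms become $\dfrac{-1}{2\pi i n\, r^{s}}$, and these cancel in pairs over $n$ and $-n$, i.e.\ $\sum_{n\neq 0}\dfrac{1}{2\pi i n}=0$ in the symmetric sense. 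Only after this exact cancellation does the paper keep the second boundary term $\dfrac{s}{12 r^{s+1}}$ and the tail $|D_{s,r,1}|\leq \dfrac{|s(s+1)|}{12(Re(s)+1)r^{Re(s)+1}}$, and combine them (using $Re(s)+1\leq |s+1|\leq 2|s|$ for $Re(s)\geq 4$) to reach the stated constant. Without the integrality-and-oddness cancellation, your bound does not close.
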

\begin{proof}
We have that;\\

$R_{s,r,1}=\sum_{n\in\mathcal{Z}_{\neq 0}}{-1\over 2\pi i n r^{s}}+\sum_{n\in\mathcal{Z}_{\neq 0}}{s\over 2\pi i n}\int_{r}^{\infty}{e^{2\pi i nx}\over x^{s+1}}dx$\\

$=\sum_{n\in\mathcal{Z}_{\neq 0}}{-s\over r^{s+1}(2\pi i n)^{2}}+\sum_{n\in\mathcal{Z}_{\neq 0}}{s(s+1)\over (2\pi i n)^{2}}\int_{r}^{\infty}{e^{2\pi i nx}\over x^{s+2}}dx$\\

$={2s\over 4r^{s+1}\pi^{2}}\sum_{n=1}^{\infty}{1\over n^{2}}+D_{s,r,1}$\\

$={s\over 2r^{s+1}\pi^{2}}{\pi^{2}\over 6}+D_{s,r,1}$\\

$={s\over 12r^{s+1}}+D_{s,r,1}$\\

where;\\

$|D_{s,r,1}|\leq {|s(s+1)|C_{s,r,1}\over 4\pi^{2}}\sum_{n\in\mathcal{Z}_{\neq 0}}{1\over n^{2}}={|s(s+1)|C_{s,r,1}\over 12}$\\

and $C_{s,r,1}\leq \int_{r}^{\infty}{1\over |x^{s+2}|}dx$\\

$=\int_{r}^{\infty}{dx\over x^{Re(s)+2}}$\\

$={1\over (Re(s)+1)r^{Re(s)+1}}$\\

It follows that;\\

$|R_{s,r,1}|\leq {|s|\over 12r^{Re(s)+1}}+{|s(s+1)|\over 12 (Re(s)+1)r^{Re(s)+1}}$\\

$\leq {|s|(Re(s)+1)\over 12(Re(s)+1)r^{Re(s)+1}}+{|s(s+1)|\over 12(Re(s)+1)r^{Re(s)+1}}$\\

$={|s|(Re(s)+1)+|s(s+1)|\over 12(Re(s)+1)r^{Re(s)+1}}$\\

$\leq {2|s(s+1)|\over 12(Re(s)+1)r^{Re(s)+1}}$\\

$\leq {|s|^{2}\over 3(Re(s)+1)r^{Re(s)+1}}$\\

Similarly, $|R_{s,r,2}|\leq {|s|^{2}\over 3(Re(s)+1)r^{Re(s)+1}}$, so that $|R_{s,r}|\leq {2|s|^{2}\over 3(Re(s)+1)r^{Re(s)+1}}$.\\

We have that;\\

$P_{s,r,1}=\sum_{n\in\mathcal{Z}_{\neq 0}}(\int_{0}^{r}p_{s,r}(x)e^{2\pi i nx}dx$\\

$=\sum_{n\in\mathcal{Z}_{\neq 0}}(\sum_{l=0}^{r-1}\int_{l}^{l+1}p_{s,r}(x)e^{2\pi i nx}dx)$\\

$=\sum_{l=0}^{r-1}(\sum_{n\in\mathcal{Z}_{\neq 0}}(\int_{l}^{l+1}p_{s,r}(x)e^{2\pi i nx}dx))$\\

$=\sum_{l=0}^{r-1}(\sum_{n\in\mathcal{Z}_{\neq 0}}(\int_{0}^{1}p_{s,r}(x+l)e^{2\pi i n(x+l)}dx))$\\

$=\sum_{l=0}^{r-1}(\sum_{n\in\mathcal{Z}_{\neq 0}}(\int_{0}^{1}p_{s,r}(x+l)e^{2\pi i nx}dx))$\\

$=\sum_{l=0}^{r-1}(\sum_{n\in\mathcal{Z}_{\neq 0}}(p^{l}_{s,r})^{\vee}(n))$\\

$=\sum_{l=0}^{r-1}(\sum_{n\in\mathcal{Z}}(p^{l}_{s,r})^{\vee}(n)-\int_{0}^{1}p_{s,r}^{l}dx)$, (\footnote{If $f\in (C[0,1]\cap C^{2}(0,1))$, and there exist $\{a_{+,j},a_{-,j}:0\leq j\leq 2\}\subset\mathcal{C}$, with $lim_{x\rightarrow 0^{+}}f^{(j)}(x)=a_{+,j}$ and $lim_{x\rightarrow 1^{-}}f^{(j)}(x)=a_{-,j}$, $(\dag)$, for $0\leq j\leq 2$, then a classical result in the theory of Fourier series, says that;\\

$\lim_{N\rightarrow\infty}\sum_{n=-N}^{N}(f)^{(\wedge)}(n)e^{2\pi i nx}=f(x)$ ($x\in (0,1)$)\\

$\lim_{N\rightarrow\infty}\sum_{n=-N}^{N}(f)^{(\wedge)}(n)={a_{+,0}+a_{-,0}\over 2}$\\

We give a simple proof of this result. First observe that there exists a polynomial $p\in\mathcal{C}[x]$, with $deg(p)=5$, such that $p^{(j)}(0)=0$ and $p^{(j)}(1)=a_{-,j}-a_{+,j}$, for $0\leq j\leq 2$. This follows from the fact that we can find $\overline{c}\subset \mathcal{C}^{3}$, such that $\overline{M}\centerdot\overline{c}=\overline{a}$, where $\overline{a}(j)=a_{-,j-1}-a_{+,j-1}$, for $1\leq j\leq 3$, and;\\

$\overline{M}=
\begin{pmatrix}
1&1&1\\
3&4&5\\
6&12&20\\
\end{pmatrix}\\
$

as $det(\overline{M})\neq 0$, and, setting $p(x)=\sum_{k=0}^{2}c_{k}x^{3+k}$. We have that $p+f\in C^{2}(S^{1})$, in which case the result follows from \cite{dep2}. Hence, it is sufficient to verify the result for the powers $\{x^{k}:0\leq k\leq 5\}$. We have that, for $k\geq 1$, $n\in\mathcal{Z}_{\neq 0}$;\\

$\int_{0}^{1} x^{k}e^{-2\pi i nx}dx$\\

$[{x^{k}e^{-2\pi i nx}\over -2\pi i n}]_{0}^{1}+{k\over 2\pi i n}\int_{0}^{1} x^{k-1}e^{-2\pi i nx}dx$\\

${-1\over 2\pi i n}+{k\over 2\pi i n}\int_{0}^{1} x^{k-1}e^{-2\pi i nx}dx$\\

$\int_{0}^{1} x^{k}e^{-2\pi i nx}dx$\\

$=-(\sum_{l=1}^{k}{k!\over (k-l+1)!(2\pi i n)^{l}})+\int_{0}^{1}e^{-2\pi i nx}dx$\\

$=-(\sum_{l=1}^{k}{k!\over (k-l+1)!(2\pi i n)^{l}})$\\

$lim_{N\rightarrow\infty}\sum_{n=-N}^{N}(x^{k})^{\wedge}(n)$\\

$={1\over k+1}-2\sum_{l=1}^{k}\sum_{n=1}^{\infty}{k!\over (k-l+1)!(2\pi i n)^{l}}$\\

Case $k=1$, we obtain $S_{k}={1\over 2}$\\

$k=2$, $S_{k}={1\over 3}+{2.2\over 4\pi^{2}}(\sum_{n=1}^{\infty}{1\over n^{2}})$\\

$k=3$, $S_{k}={1\over 4}+{2.3\over 4\pi^{2}}(\sum_{n=1}^{\infty}{1\over n^{2}})$\\

$k=4$, $S_{k}={1\over 5}+{2.4\over 4\pi^{2}}(\sum_{n=1}^{\infty}{1\over n^{2}})-{2.24\over 16\pi^{4}}(\sum_{n=1}^{\infty}{1\over n^{4}})$\\

$k=5$, $S_{k}={1\over 6}+{2.5\over 4\pi^{2}}(\sum_{n=1}^{\infty}{1\over n^{2}})-{2.120\over 16\pi^{4}}(\sum_{n=1}^{\infty}{1\over n^{4}})$\\

Using Lemma \ref{cot},  we have that;\\

$\sum_{n=1}^{\infty}{1\over n^{2}}={-\pi [cot(\pi z)z]^{(2)}|_{0}\over 2.2!}={-\pi.-4\pi\over 6.2.2!}={\pi^{2}\over 6}$\\

$\sum_{n=1}^{\infty}{1\over n^{4}}={-\pi [cot(\pi z)z]^{(4)}|_{0}\over 2.4!}={-\pi.-48{\pi}^{3}\over 90.2.4!}={\pi^{4}\over 90}$\\

$S_{2}={1\over 3}+{2.2\over 4\pi^{2}}({\pi^{2}\over 6})={1\over 3}+{1\over 6}={1\over 2}$\\

$S_{3}={1\over 4}+{2.3\over 4\pi^{2}}{\pi^{2}\over 6}={1\over 4}+{1\over 4}={1\over 2}$\\

$S_{4}={1\over 5}+{2.4\over 4\pi^{2}}{\pi^{2}\over 6}-{2.24\over 16\pi^{4}}{\pi^{4}\over 90}={1\over 2}$\\

$S_{5}={1\over 6}+{2.5\over 4\pi^{2}}{\pi^{2}\over 6}-{2.60\over 16\pi^{4}}{\pi^{4}\over 90}={1\over 2}$\\})\\

$=\sum_{l=0}^{r-1}({p^{l}_{s,r}(1)+p^{l}_{s,r}(0)\over 2}-\int_{0}^{1}p_{s,r}^{l}dx)$\\

$=\sum_{l=0}^{r-1}({p_{s,r}(l+1)+p_{s,r}(l)\over 2}-\int_{0}^{1}p_{s,r}(x+l)dx)$\\

$=\sum_{l=0}^{r-1}({p_{s,r}(l+1)+p_{s,r}(l)\over 2}-\int_{l}^{l+1}p_{s,r}(x)dx)$\\

$={p_{s,r}(0)+p_{s,r}(r)\over 2}+\sum_{l=1}^{r-1}p_{s,r}(l)-\int_{0}^{r}p_{s,r}(x)dx$\\

Similarly;\\

$P_{s,r,2}={p_{s,r}(0)+p_{s,r}(r)\over 2}+\sum_{l=1}^{r-1}p_{s,r}(l)-\int_{0}^{r}p_{s,r}(x)dx$\\

so that;\\

$P_{s,r}=P_{s,r,1}+P_{s,r,2}$\\

$=p_{s,r}(0)+p_{s,r}(r)+2\sum_{l=1}^{r-1}p_{s,r}(l)-2\int_{0}^{r}p_{s,r}(x)dx$\\

\end{proof}

\begin{lemma}
\label{totalsum}
If $Re(s)\geq 4$, $r\geq 1$, we have that;\\

$\sum_{n=r}^{\infty}{1\over n^{s}}$\\

$=\int_{r}^{\infty}{dx\over x^{s}}+{p_{s,r}(r)\over 2}+{R_{s,r}\over 2}$\\

$={1\over (s-1)r^{s-1}}+{R_{s,r}\over 2}+{r^{s}\over 2}$\\

If $r\geq 2$;\\

$\sum_{n=1}^{r-1}{1\over n^{s}}$\\

$=\sum_{j=0}^{r+1}(\overline{a}_{s,r})_{j+1}({B_{2j+1}(r)\over 2j+1})$\\

\end{lemma}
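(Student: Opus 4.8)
The plan is to deduce both displayed identities from the ``master identity'' of Lemma~\ref{conclusions} together with property~$(ii)$ of Lemma~\ref{polynomial}, treating the two parts in turn.

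For the first identity I would start from
\[
f_{s,r}(0)+P_{s,r}+R_{s,r}=p_{s,r}(0)+2\sum_{n=1}^{\infty}\frac{1}{n^{s}}
\]
(Lemma~\ref{conclusions}) and substitute the closed form of $P_{s,r}$ from Lemma~\ref{approx}. The terms $p_{s,r}(0)$ cancel, and property~$(ii)$ of Lemma~\ref{polynomial} replaces $p_{s,r}(l)$ by $l^{-s}$ for $1\le l\le r-1$, so that $2\sum_{l=1}^{r-1}l^{-s}$ merges with $2\sum_{n=1}^{\infty}n^{-s}$ to leave $2\sum_{n\ge r}n^{-s}$. The one remaining quantity to pin down is $f_{s,r}(0)=\int_{\mathcal{R}}g_{s,r}(x)\,dx$ (Definition~\ref{f} at $y=0$), which is a bona fide Lebesgue integral since $g_{s,r}\in L^{1}(\mathcal{R})$ by Lemma~\ref{conclusions}. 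Splitting at $\pm r$ and invoking the symmetry $g_{s,r}(x)=g_{s,r}(-x)$ from Lemma~\ref{hypotheses} gives $f_{s,r}(0)=2\int_{0}^{r}p_{s,r}(x)\,dx+2\int_{r}^{\infty}x^{-s}\,dx$; plugging this back, the term $2\int_{0}^{r}p_{s,r}$ cancels the $-2\int_{0}^{r}p_{s,r}$ from $P_{s,r}$, leaving $2\sum_{n\ge r}n^{-s}=2\int_{r}^{\infty}x^{-s}\,dx+p_{s,r}(r)+R_{s,r}$. Dividing by $2$ yields the first equality; the second follows from $\int_{r}^{\infty}x^{-s}\,dx=\frac{1}{(s-1)r^{s-1}}$ (valid as $Re(s)\ge 4>1$) and $p_{s,r}(r)=r^{-s}$ from property~$(ii)$ --- note that the last displayed summand should read $\frac{r^{-s}}{2}$ in place of $\frac{r^{s}}{2}$.

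For the second identity, assume $r\ge 2$. Property~$(ii)$ of Lemma~\ref{polynomial} again gives $\sum_{n=1}^{r-1}n^{-s}=\sum_{n=1}^{r-1}p_{s,r}(n)$. Writing $p_{s,r}(x)=\sum_{j=0}^{r+1}(\overline{a}_{s,r})_{j+1}x^{2j}$ and interchanging the two finite sums, the task reduces to the power sums $\sum_{n=1}^{r-1}n^{2j}$, for which I would invoke the Faulhaber--Bernoulli formula $\sum_{n=1}^{r-1}n^{k}=\frac{1}{k+1}\bigl(B_{k+1}(r)-B_{k+1}\bigr)$ (the same device used, for individual powers, in the footnote to Lemma~\ref{approx}). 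Since $B_{2j+1}=0$ for $j\ge 1$ and the $j=0$ contribution is an elementary count, the double sum collapses to $\sum_{j=0}^{r+1}(\overline{a}_{s,r})_{j+1}\frac{B_{2j+1}(r)}{2j+1}$, as asserted.

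The step I expect to be the main obstacle is the bookkeeping in the first identity: one must verify that $g_{s,r}$ is genuinely even on all of $\mathcal{R}$ --- so that the branch convention $(-1)^{s}=e^{-i\pi s}$ of Definition~\ref{h} is precisely what makes $x^{-s}$ fold correctly onto $(-\infty,-r]$ --- that $f_{s,r}(0)$ is well defined, and that the two copies of $\int_{0}^{r}p_{s,r}$ really do cancel. Everything else is routine, the only minor subtlety in the second identity being the usual Bernoulli-convention / off-by-one at the $j=0$ term.
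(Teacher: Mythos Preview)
Your proposal is correct and follows essentially the same route as the paper: the paper simply states that the first identity is ``a simple rearrangement of the claim in Lemma~\ref{conclusions}, using Lemma~\ref{approx}'', then evaluates $\int_{r}^{\infty}x^{-s}\,dx$ and $p_{s,r}(r)$, while for the second identity it uses property~$(ii)$, swaps the finite sums, and applies the Bernoulli power-sum formula exactly as you describe. Your write-up is in fact more explicit than the paper's, since you spell out the computation $f_{s,r}(0)=2\int_{0}^{r}p_{s,r}+2\int_{r}^{\infty}x^{-s}$ via symmetry (which the paper leaves implicit), and you correctly flag that the displayed $\tfrac{r^{s}}{2}$ should read $\tfrac{1}{2r^{s}}$ and that the $j=0$ Bernoulli term requires care.
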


\begin{proof}
The first claim is just a simple rearrangement of the claim in Lemma \ref{conclusions}, using Lemma \ref{approx}. We have that;\\

$\int_{r}^{\infty}{dx\over x^{s}}={1\over (s-1)r^{s-1}}$\\

and ${p_{s,r}(r)\over 2}={r^{s}\over 2}$, by property $(ii)$ in Lemma \ref{polynomial}.

Moreover;\\

$\sum_{n=1}^{r-1}{1\over n^{s}}$\\

$=\sum_{l=1}^{r-1}p_{s,r}(l)$\\

$=\sum_{l=0}^{r-1}\sum_{j=0}^{r+1}(\overline{a}_{s,r})_{j+1}l^{2j}$\\

$=\sum_{j=0}^{r+1}(\overline{a}_{s,r})_{j+1}(\sum_{l=0}^{r-1}l^{2j})$\\

$=\sum_{j=0}^{r+1}(\overline{a}_{s,r})_{j+1}({B_{2j+1}(r)-B_{2j+1}(0)\over 2j+1})$\\

$=\sum_{j=0}^{r+1}(\overline{a}_{s,r})_{j+1}({B_{2j+1}(r)\over 2j+1})$\\

\end{proof}

\begin{rmk}
\label{furtherideas}
Using Lemma \ref{approx}, we have that $lim_{r\rightarrow\infty}|R_{s,r}|=0$, hence, Lemma \ref{totalsum} reduces the calculation of $\sum_{n=1}^{\infty}{1\over n^{s}}$ to a calculation involving Bernoulli polynomials. Moreover, letting $A_{s,r}=\sum_{n=r}^{\infty}{1\over n^{s}}$, we have that;\\

$\int_{r}^{\infty}{dx\over |x^{s}|}\leq |A_{s,r}|\leq \int_{r-1}^{\infty}{dx\over |x^{s}|}$\\

$\int_{r}^{\infty}{dx\over x^{Re(s)}}\leq |A_{s,r}|\leq \int_{r-1}^{\infty}{dx\over |x^{s}|}$\\

${1\over (Re(s)-1)r^{Re(s)-1}}\leq |A_{s,r}|\leq {1\over 3(r-1)^{3}}$\\

Observing that;\\

 ${|s|^{2}\over 3(Re(s)+1)r^{Re(s)+1}}\leq {1\over (Re(s)-1)r^{Re(s)-1}}$\\

  if $r\geq |s|\sqrt{{(Re(s)-1)\over 3(Re(s)+1)}}$, we have that the estimate $\sum_{j=0}^{r+1}(\overline{a}_{s,r})_{j+1}({B_{2j+1}(r)\over 2j+1})+{1\over (s-1)r^{s-1}}+{1\over 2r^{s}}$ improves upon $\sum_{j=0}^{r+1}(\overline{a}_{s,r})_{j+1}({B_{2j+1}(r)\over 2j+1})$, for sufficiently large values of $r$. The coefficients $(\overline{a}_{s,r})_{j}$, $1\leq j\leq r+2$ can be computed using simple linear algebra. The computation of absolutely convergent Riemann sums, and their differences, occurs in the evaluation of $\zeta(s)$, for $0<Re(s)<1$, it is well known that $\zeta(s)\neq 0$, for $Re(s)\geq 1$. It is hoped that the above method might lead to some progress in the direction of solving the famous Riemann hypothesis, that, $\zeta(s)=0$ iff $Re(s)={1\over 2}$ or $s=-2w$, for $w\in\mathcal{Z}_{\geq 1}$, see \cite{A}.
\end{rmk}
\end{section}

\end{document}